\theoremstyle{plain}
\newtheorem{thm}{Theorem}[section]
\newtheorem{cor}{Corollary}[section]
\newtheorem{lem}{Lemma}[section]
\newtheorem{prop}{Proposition}[section]
\theoremstyle{definition}
\newtheorem{defn}{Definition}[section]
\newtheorem{exmp}{Example}[section]
\theoremstyle{remark}
\newtheorem{rem}{Remark}[section]
\title{Local scaling asymptotics\\ in phase space and time \\ in Berezin-Toeplitz quantization}
\author{Roberto Paoletti\footnote{\noindent{\bf Address:}
Dipartimento di Matematica e Applicazioni, Universit\`a degli Studi
di Milano Bicocca, Via R. Cozzi 53, 20125 Milano,
Italy; {\bf e-mail}: roberto.paoletti@unimib.it }}
\date{}
\begin{document}

\maketitle
\begin{abstract}
 This paper deals with the local semiclassical asymptotics of a quantum evolution operator
 in the Berezin-Toeplitz scheme, when both time and phase space variables are subject to appropriate
 scalings in the neighborhood of the graph of the underlying classical dynamics. Global consequences
 are then drawn regarding the scaling asymptotics of the trace of the quantum evolution as a function
 of time. 
\end{abstract}

\section{Introduction}

\subsection{Preamble}

This paper is concerned with the local asymptotics of the quantization $\Phi_\tau^\hbar$
($\tau\in \mathbb{R}$)
of a varying Hamiltonian symplectomorphism
of a symplectic manifold, $\phi_\tau:M\rightarrow M$,
in the Berezin-Toeplitz scheme \cite{ber}, \cite{bg}, \cite{schlichenmaier},
\cite{z-index}. 
Here the asymptotics are taken in the semiclassical regime $\hbar\rightarrow 0^+$,
and \lq varying\rq\, means that, rather than considering one fixed symplectomorphisms $\phi_\tau$
at time say $\tau=\tau_0\in \mathbb{R}$,
we are are concerned with the behaviour of $\Phi_\tau^\hbar$ when $\tau-\tau_0\rightarrow 0$ 
at different rates with respect to $\hbar\rightarrow 0^+$. Thus we shall look at the 
asymptotics of the distributional kernels of $\Phi_\tau^\hbar$ when both \lq time\rq \, and \lq phase space\rq\,
variables are suitably rescaled in terms of $\hbar$.

These distributional kernels are the Berezin-Toeplitz analogues of 
quantum evolution operators, and are therefore a fundamental and natural object of study.
Broadly stated, our purpose is to investigate how their local scaling 
asymptotics and geometric concentration 
relate to the underlying classical dynamics. 

This is close in spirit
to the study of near-diagonal scaling asymptotics of equivariant Bergman-Szeg\"{o} kernels
and their ensuing analogues for
 Toeplitz operators  (see \cite{bsz}, \cite{sz}, \cite{mm}), 
and their generalization to (fixed) quantized Hamiltonian symplectomorphisms
 \cite{paoletti_intjm}; however, rather than restricting our analysis to
\lq phase space\rq\, scaling asymptotics,
here we shall also consider scaling asymptotics in the \lq time\rq\, variable.
The approach we shall take is to look at the (local) asymptotics 
of $\Phi_{\tau_0+\sqrt{\hbar}\,\tau}^\hbar$ for $\tau$ suitably bounded in terms of $\hbar$.

One motivation for including \lq time variable\rq\, scaling asymptotics is 
related to trace computations. Namely,
the semiclassical asymptotics of $\mathrm{trace}(\Phi_\tau^\hbar)$ 
are controlled by the geometry of the fixed locus $M_\tau\subseteq M$ of $\phi_\tau$, 
and certain Poincar\'{e} type data along it.
Therefore, they have 
discontinuities (both in order of growth and leading order term, say) 
in the presence of jumping behavior of $M_\tau$ as a function of $\tau$. 
It is therefore of interest to examine the nature of these discontinuities in time in 
terms of the underlying symplectic dynamics. Under mild assumptions, 
they can be 
related fairly explicitly to the critical data of $f$ on $M_{\tau_0}$.

\subsection{The symplectic context and its quantization}
\label{subsctn:symplectic_context}

Let us emphasize from the outset that 
there is a broader scope for the approach in this article than the one we shall present here,
as by introducing the appropriate microlocal framework from \cite{bg} and \cite{sz} 
one might cover
compact and integral almost K\"{a}hler manifolds. 
For the sake of simplicity, we shall however 
confine ourselves to the complex projective setting.

Let us describe our geometric set-up. 
Our archetype of a \lq classical phase space\rq\, will be a connected
complex K\"{a}hler manifold $(M,J,2\omega)$, of complex dimension $d$.
So $J$ is a complex structure on $M$, and $\omega$ is a compatible symplectic structure (thus, of type $(1,1)$),
such that $g(\cdot,\cdot)=:\omega(\cdot,J\cdot)$ is a Riemannian metric, and $h=g-i\omega$ is an
Hermitian metric. 

In the K\"{a}hler setting, the symplectic and Riemannian volume forms, $\mathcal{S}_M$ and
$\mathcal{R}_M$, coincide, and we shall generally denote them by $\mu_M=:\omega^{\wedge d}/d!$.
We shall also denote by $\mathrm{dV}_M=|\mu_M|$ the Riemannian density (and the corresponding measure)
on $(M,g)$. When dealing with general symplectic submanifolds $F\subseteq M$, 
we shall need to distinguish between $\mathcal{S}_F$ and
$\mathcal{R}_F$ (\S \ref{subsctn:volume_forms}).

Any classical observable $f\in \mathcal{C}^\infty(M;\mathbb{R})$ determines
a Hamiltonian vector field $\upsilon_f$, and the corresponding 
flow of Hamiltonian symplectomorphisms 
$$\phi^M:\tau\in \mathbb{R}\mapsto (\phi^M_\tau:M\rightarrow M).$$ In most cases,
$\phi^M$ is not family of biholomorphisms of $(M,J)$; this happens if and
only if $\upsilon_f$ is the real component of a holomorphic vector field on $(M,J)$
\cite{kn}. In this case, we shall
say that $f$ is \textit{compatible} (with the K\"{a}hler structure). 

In this framework, the subclass of \textit{quantizable} phase spaces, 
according to the Berezin-Toeplitz scheme, is given by those $(M,J,\omega)$ that are actually Hodge manifolds \cite{gh}; 
we may then assume without any essential loss that $\omega$ be cohomologically integral. 
Therefore, there exists a positive holomorphic
Hermitian line bundle $(A,h)$ on $M$, such that its unique compatible connection 
$\nabla_A$ has curvature
$\Theta=-2i\,\omega$ \cite{gh}. Often $(A,h)$ is called a \textit{quantizing line bundle} of $(M,J,\omega)$
\cite{cgr}, \cite{w}, \cite{gs}, \cite{schlichenmaier}.

For $k\ge 1$, let $H^0\left(M,A^{\otimes k}\right)$ be the finite dimensional
vector space of holomorphic sections of $A^{\otimes k}$.
Then $h$ and $\mathrm{dV}_M$  endow 
$H^0\left(M,A^{\otimes k}\right)$ 
with a natural Hermitian structure $H_k$ \cite{gh}. Namely, if
$\sigma,\,\tau\in H^0\left(M,A^{\otimes k}\right)$ set
$$
H_k(\sigma,\tau)=:\int_M\,h_m\big(\sigma(m),\tau(m)\big)\,\mathrm{dV}_M(m),
$$
where we also denote by $h$ the Hermtian structure induced on $A^{\otimes k}$ by $h$.
In Berezin-Toeplitz quantization, 
the Hilbert space 
$\mathcal{H}_k=:\left(H^0\left(M,A^{\otimes k}\right),H_k\right)$ is viewed 
as a quantization of $(M,J,\omega)$ at Planck's constant $\hbar=1/k$.
Obviously, $\mathcal{H}_k$ is a closed subspace of the Hilbert space of all $L^2$-summable
sections, $L^2\left(M,A^{\otimes k}\right)$.

The quantization of  
$f\in \mathcal{C}^\infty(M;\mathbb{R})$
should be the assignment of self-adjoint operators 
$\mathcal{T}(f)_k:\mathcal{H}_k\rightarrow \mathcal{H}_k$ for every $k\ge 1$. The 
Berezin-Toeplitz approach is to set 
$\mathcal{T}(f)_k=:\mathcal{P}_k\circ \mathcal{M}_k\circ \mathcal{P}_k$, where
$\mathcal{P}_k:L^2\left(M,A^{\otimes k}\right)\rightarrow H^0\left(M,A^{\otimes k}\right)$
is the $L^2$-orthogonal projector, and $\mathcal{M}_f$ denotes multiplication by $f$.
This choice leads expected semiclassical properties and induces a 
\textit{star product} on the algebra of observables \cite{g-star}.
One thinks of $\mathcal{T}(f)=:\bigoplus_{k\ge 0}\mathcal{T}(f)_k$ as acting 
on the $L^2$ direct sum $\mathbf{L}(A)=: \bigoplus_{k\ge 0}L^2\left(M,A^{\otimes k}\right)$.

The quantization of $\phi^M_\tau:M\rightarrow M$, on the other hand, should be a
family of unitary operators $\widetilde{\Phi}_{\tau,k}:\mathcal{H}_k\rightarrow\mathcal{H}_k$. 
For compatible Hamiltonians, the choice of $\widetilde{\Phi}_{\tau,k}$
is straightforward
(for this reason, compatible classical observables on Hodge manifolds
are alternatively dubbed \textit{quantizable} \cite{cgr}). 
The general situation is more subtle, as we now recall.

The phase flow $\phi^M_\tau$ lifts in a natural manner to flows 
$\phi^{A^{\otimes k}}_\tau:A^{\otimes k}\rightarrow A^{\otimes k}$ for every $k$;
each $\phi^{A^{\otimes k}}_\tau$ is a fiberwise linear,
metric and connection preserving diffeomorphism.
These geometric data determine unitary operators 
\begin{equation}
 \label{eqn:lifted_linear_action}
\mathcal{V}_{\tau,k}:\sigma\in L^2\left(M,A^{\otimes k}\right)\,\mapsto\,
\phi^{A^{\otimes k}}_\tau\circ \sigma\circ \phi^{M}_{-\tau}\in L^2\left(M,A^{\otimes k}\right).
\end{equation}

When $f$ is compatible, both $\phi^{A^{\otimes k}}_\tau$ and $\phi^{M}_\tau$ are biholomorphisms
for every $\tau\in \mathbb{R}$ and $k\in \mathbb{Z}$; therefore $\mathcal{V}_{\tau,k}$ preserves 
$\mathcal{H}_k\subseteq L^2\left(M,A^{\otimes k}\right)$ and induces by
restriction a unitary operator $\widetilde{\Phi}_{\tau,k}:\mathcal{H}_k\rightarrow \mathcal{H}_k$.
It is convenient to regard $\widetilde{\Phi}_{\tau,k}$ as defined
on all of $L^2\left(M,A^{\otimes k}\right)$, and vanishing on the $L^2$-orthocomplement
of $\mathcal{H}_k$; with this understanding, 
$\widetilde{\Phi}_{\tau,k}=\mathcal{P}_k\circ \mathcal{V}_{\tau,k}\circ \mathcal{P}_k$.

In general, however, $\mathcal{V}_{\tau,k}(\mathcal{H}_k)\nsubseteq \mathcal{H}_k$; to obtain an
endomorphism of $\mathcal{H}_k$, we may then consider the composition 
$\mathcal{P}_k\circ \mathcal{V}_{\tau,k}\circ \mathcal{P}_k$ as before, but the latter need not
unitary. Nonetheless, by the theory of \cite{z-index},
one does obtain a 1-parameter family of unitary operators (at least for $k\gg 0$) by setting
\begin{equation}
 \label{eqn:toeplitz_fourier_compositions}
\widetilde{\Phi}_{\tau,k}=:\mathcal{T}_{\tau,k}\circ 
\mathcal{P}_k\circ \mathcal{V}_{\tau,k}\circ \mathcal{P}_k,
\end{equation}
where $\mathcal{T}_{\tau}$ is a
canonical 1-parameter family of $S^1$-invariant zeroth order \textit{Toeplitz operators}, 
and $\mathcal{T}_{\tau,k}$ is its restriction to
$\mathcal{H}_k$. 
In the linear case, this principle goes back to Daubechies \cite{d}, who introduced Toeplitz
multipliers to construct unitary projective representations of the linear symplectic group.

An example of invariant zeroth order Toeplitz operator on $X$ is 
the quantum observable $\mathcal{T}(f)=\bigoplus_{k\ge 0}\mathcal{T}(f)_k$ introduced above; 
up to a smoothing term, 
any invariant zeroth order Toeplitz operator
has the same form, with $f\in \mathcal{C}^\infty(M)$ replaced by an asymptotic
expansion $f(k)\sim f_0+k^{-1}\,f_1+\ldots$\footnote{$f$ is real valued if and only if
$\mathcal{T}(f)$ is self-adjoint.} \cite{g-star}.

Our focus here is on the local asymptotics of $\widetilde{\Phi}_{\tau_k,k}$
with $\tau_k=\tau_0+\tau/\sqrt{k}$, where $\tau$ is suitably bounded in terms of $k$.
We shall however need to adopt a slightly different view of this picture, as we next explain.

\subsection{The unit circle bundle and the Szeg\"{o} kernel}

We follow in this work the general approach to Berezin-Toeplitz quantization from
\cite{bg}, \cite{z-index}, \cite{z}, \cite{ks}, \cite{bsz}, \cite{sz}.
Thus the natural framework for the 
present analysis is the unit circle bundle
$X\subseteq A^\vee$ in the dual line bundle to $A$, 
with its structure 
$S^1$-action $r:S^1\times X\rightarrow X$ given by scalar multiplication. 
This is a principal $S^1$-bundle
over $M$, with projection $\pi:X\rightarrow M$. Then $\nabla_A$ 
corresponds to a connection 1-form $\alpha$ on $X$, and 
$d\alpha=2\,\pi^*(\omega)$. The pair $(X,\alpha)$ is a contact manifold, and
$\mu_X=:(1/2\pi)\,\alpha\wedge\pi^*\left(\mathrm{dV}_M\right)$
is a volume form on $X$, with induced measure $\mathrm{dV}_X=|\mu_X|$. 
We shall write $L^2(X)=:L^2(X,\mathrm{dV}_X)$.

There is on $X$ a natural Riemannian structure $\widetilde{g}$. This is defined by
declaring the horizontal and vertical tangent bundles of $X$,  
$\mathcal{H}(\alpha)=:\ker (\alpha)$ and $\mathcal{V}(\pi)=:\ker(d\pi)\subseteq TX$,
to be orthogonal, $\pi$ to be a Riemannian submersion, and 
the generator $\partial/\partial \theta$ of the structure $S^1$-action
to have unit norm\footnote{$\theta$ will always denote an \lq angular\rq\, coordinate which is translated 
by the $S^1$-action}. 

One advantage of dealing with $X$ is that sections of powers of $A$ naturally and
unitarily correspond to functions on $X$, of the appropriate $S^1$-equivariance.
Namely, for any $k\in \mathbb{Z}$ consider the $k$-th isotype 
$$
\mathcal{C}^\infty(X)_k=:\left\{s\in \mathcal{C}^\infty(X;\mathbb{C}):\,s\big(r_g(x)\big)=g^k\,s(x)\,\,
\forall\,x\in X,\,g\in S^1\right\},
$$
and similarly for $L^2(X)_k\subseteq L^2(X)$. Then $$\left(\mathcal{C}^\infty(X)_k,\mathrm{dV}_X\right)
\cong \left(\mathcal{C}^\infty\left(M,A^{\otimes k}\right),H_k\right)$$ as Hilbert spaces in a natural manner \cite{z}, \cite{sz}.

Under the latter unitary isomorphism, holomorphic sections collectively correspond
to the \textit{Hardy space} $H(X)\subseteq L^2(X)$ of $X$. Namely, 
let $H(X)_k\subseteq \mathcal{C}^\infty(X)_k$ be the subspace corresponding to $H^0\left(M,A^{\otimes k}\right)$ under the 
isomorphism $\left(\mathcal{C}^\infty(X)_k,\mathrm{dV}_X\right)
\cong \mathcal{C}^\infty\left(M,A^{\otimes k}\right)$. 
Then $H(X)=\bigoplus_{k\ge 0}H(X)_k$ (the Hilbert space direct sum).

The $L^2$-orthogonal projector 
$\Pi:L^2(X)\rightarrow H(X)$ is called the \textit{Szeg\"{o} projector} of $X$, and has
been described as a FIO in \cite{bs} (see also the discussion in \cite{sz}). 
We have $\Pi=\bigoplus_{k\ge 0}\Pi_k$, where $\Pi_k$ (the $k$-th Fourier component of $\Pi$)
is the orthogonal projector onto $H(X)_k$. Hence $\Pi_k$ is a smoothing operator, 
with Schwartz kernel\footnote{we shall systematically use the same
symbol for an operator and its distributional kernel}
\begin{equation}
 \label{eqn:equivariant_k_szego}
\Pi_k(x,y)=\sum _{j=1}^{N_k}s_{j}^{(k)}(x)\,\overline{s_{j}^{(k)}(y)}\,\,\,\,\,\,\,\,\,(x,y\in X),
\end{equation}
where $N_k=\dim H^0\left(M,A^{\otimes k}\right)$ and $\big( s_{j}^{(k)}\big)_j$ 
is an arbitrary orthonormal basis of
$H(X)_k$. 

\subsection{The contact flow}
As mentioned already, $\phi^M$ lifts to a metric and connection
preserving fiberwise linear flow
$\phi^A$, which therefore restricts to a contact flow $\phi^X$ on $X$. 
Infinitesimally, this may be described explicitly
as follows. 
Let $\upsilon_f^\sharp$ be the horizontal
lift of $\upsilon_f$ for $\alpha$; then 
\begin{equation}
\label{eqn:contact_vector_field}
\widetilde{\upsilon}_f=:\upsilon_f^\sharp-f\,\dfrac{\partial}{\partial \theta}
\end{equation}
is the contact vector field on $X$ generating $\phi^X$. Clearly
$\phi^X_\tau:X\rightarrow X$ lifts $\phi^M_\tau:M\rightarrow M$,
and pull-back
$\left(\phi^X_{-\tau}\right)^*:L^2(X)\rightarrow L^2(X)$ is a unitary operator.
By (\ref{eqn:contact_vector_field}), $\widetilde{\upsilon}_f$ depends on $f$, and not just on $\upsilon_f$.

Since $\widetilde{\upsilon}_f$ is $S^1$-invariant, 
so is $\phi^X_\tau$. Therefore, $\left(\phi^X_{-\tau}\right)^*$ preserves every $L^2(X)_k$.
The unitary operators
\begin{equation}
 \label{eqn:pull-back_circle_bundle}
V_{\tau,k}=\left.\left(\phi^X_{-\tau}\right)^*\right|_{L^2(X)_k}:L^2(X)_k\rightarrow L^2(X)_k
\end{equation}
correspond to (\ref{eqn:lifted_linear_action}) under the isomorphism
$L^2(X)_k\cong L^2\left(M,A^{\otimes k}\right)$. If $f$ is compatible, then
$V_{\tau,k}\big(H(X)_k\big)=H(X)_k$ for every $k$ and $\tau$, and the unitary operators
$\Phi_{\tau,k}:H(X)_k\rightarrow H(X)_k$ induced by restriction
correspond to $\widetilde{\Phi}_{\tau,k}:\mathcal{H}_k\rightarrow \mathcal{H}_k$.

\subsection{Toeplitz operators and quantum evolution}

We adopt the definition of Toeplitz operators from \cite{bg}. In view of the geometric
structure of the wave front of $\Pi$, for any
 pseudodifferential operator $Q$ on $X$ the composition $\Pi\circ Q\circ \Pi$ is well-defined.

\begin{defn}
 \label{defn:toeplitz_operator}
A \textit{Toeplitz operator} $T:\mathcal{C}^\infty(X)\rightarrow \mathcal{C}^\infty(X)$ 
of order $\ell\in \mathbb{Z}$ on $X$ 
is a composition of the form $T=:\Pi\circ Q\circ \Pi$, where $Q$ is a pseudodifferential operator
of classical type  on $X$, of order $\ell$. The \textit{principal
symbol} $\varrho_T$ of $T$ is the restriction of the principal symbol of $Q$ to the symplectic cone sprayed
by the connection form,
$$
\Sigma=:\Big\{(x,\,r\,\alpha_x)\,:\,x\in X,\,r>0\Big\}\subseteq T^*X\setminus \{0\};
$$
this is well defined, independently of the choice of $Q$ \cite{bg}.
Thus $\varrho_T(x,r\,\alpha_x)=r^{\ell}\,\varrho_T(x,\alpha_x)$. 
\end{defn}

\begin{rem}
Toeplitz operators are clearly filtered by their order, and if $\varrho_T=0$ as an operator
of order $\ell$, then $T$ is really of order $\le \ell-1$ in obvious sense. 
\end{rem}

If $\ell\le 0$, we can alternatively view $T$ as a continuous endomorphism of $H(X)$, or as a continuous 
endomorphism of
$L^2(X)$ vanishing on the $L^2$-orthocomplement $H(X)^\perp$.  
If in addition $T$ is $S^1$-invariant (which amounts to saying that $Q$ can be chosen $S^1$-invariant),
then it restricts for every $k$ to a uniformly bounded operator
\begin{equation}
 \label{eqn:toeplitz_k}
T_k=\Pi_k\circ Q\circ \Pi_k:H(X)_k\rightarrow H(X)_k.
\end{equation}
If $T$ is zeroth order and $S^1$-invariant, then $\varrho_T$ is in a natural manner a $\mathcal{C}^\infty$
function on $M$, that we shall call the \textit{reduced symbol} of $T$.

The discussion in \S \ref{subsctn:symplectic_context} surrounding (\ref{eqn:toeplitz_fourier_compositions})
can then be rephrased by saying that there
exists a canonical $\mathcal{C}^\infty$ family $T_\tau$ of invariant
Toeplitz operators on $X$ such that 
\begin{equation}
 \label{eqn:adjusted_propagator_X}
\Phi_{\tau}=:T_\tau\circ \Pi\circ \left(\phi^X_{-\tau}\right)^*\circ \Pi=T_\tau\circ \left(\phi^X_{-\tau}\right)^*\circ \Pi
:H(X)\rightarrow H(X)
\end{equation}
is unitary, at least on the orthocomplement of a finite dimensional subspace \cite{z-index}.

We are led by the previous considerations to consider operators of the general form
\begin{equation}
 \label{eqn:general_composition}
U_{\tau}=:R_\tau\circ \Pi\circ \left(\phi^X_{-\tau}\right)^*\circ \Pi=R_\tau\circ \left(\phi^X_{-\tau}\right)^*\circ \Pi
:H(X)\rightarrow H(X),
\end{equation}
where
$R_\tau$ is a $\mathcal{C}^\infty$ family of $S^1$-invariant
zeroth order Topelitz operators, with reduced symbol $\varrho_\tau=:\varrho_{R_\tau}$,
and their equivariant restrictions 
\begin{equation}
 \label{eqn:general_composition_k}
U_{\tau,k}=:R_{\tau,k}\circ \left(\phi^X_{-\tau}\right)^*\circ \Pi_k:H(X)_k\rightarrow H(X)_k.
\end{equation}
These are the general models for \lq adjusted\rq\, quantum evolution operators, according to the philosophy
from \cite{z-index} (and \cite{d} in the linear case).
Adopting the notation in (\ref{eqn:equivariant_k_szego}), the Schwartz kernel 
$U_{\tau,k}\in \mathcal{C}^\infty(X\times X)$
is 
\begin{equation}
 \label{eqn:equivariant_k_U}
U_{\tau,k}(x,y)=\sum _{j=1}^{N_k}U_\tau\big(s_{j}^{(k)}\big)(x)
\cdot\overline{s_{j}^{(k)}(y)}\,\,\,\,\,\,\,\,\,(x,y\in X).
\end{equation}

Operator kernels such as (\ref{eqn:equivariant_k_U})
are thus crucial to Berezin-Toeplitz
geometric quantization; this motivates studying their near-graph local scaling asymptotics 
and their relation to the underlying symplectic dynamics \cite{paoletti_intjm}. This may be seen as a dynamical
generalization of the near diagonal scaling asymptotics of the equivariant components of the Szeg\"{o}
kernel \cite{bsz}, \cite{sz}, \cite{mm}.

Broadly speaking, the emphasis in \cite{paoletti_intjm} was on fixed time
scaling asymptotics of the form
$U_{\tau_0,k}\big(x+\upsilon_1/\sqrt{k},x_{\tau_0}+\upsilon_2/\sqrt{k}\big)$, where
$\tau_0\in \mathbb{R}$,
$x\in X$, $x_{\tau_0}=\phi^X_{-\tau_0}(x)$, $\upsilon_1\in T_xX$, $\upsilon_2\in T_{x_{\tau_0}}X$.
In the first part of this paper, we shall build on the arguments in \cite{paoletti_intjm}
to study scaling asymptotics of the form 
$U_{\tau_k,k}\big(x+\upsilon_1/\sqrt{k},x_{\tau_0}+\upsilon_2/\sqrt{k}\big)$, where
$\tau_k=\tau_0+\tau/\sqrt{k}$ and $\tau$ satisfies certain bounds; for example, $\tau$ might
be fixed and $\neq 0$. These \lq scaled time\rq\,
asymptotics are the object of Theorem \ref{thm:local_scaling_asymptotics_time}.

In the second part, we shall apply these results to the asymptotics of the traces
of $U_{\tau_k,k}$,
$\mathrm{trace}\left(U_{\tau_k,k}\right)$, under certain assumptions on the structure of the fixed
locus of $\phi^M_{\tau_0}$ and of the critical locus of $f$ 
on it (Theorem \ref{thm:global_trace_asymptotics_tau_0_morse_bott}).

 \part{Local scaling asymptotics}
 
 We first focus on the local scaling asymptotics of the \lq quantum evolution\rq \, operators
 $U_{\tau,k}$. We begin by considering general near graph-asymptotics, in the spirit of Theorem 1.4
of \cite{paoletti_intjm} but now including time rescaling. Then we specialize to the scaling asymptotics
in the neighborhood of fixed loci. Throughout our discussion, we shall adopt the short-hand
$$
m_\tau=:\phi^M_{-\tau}(m), \,\,\,\,\,\,\,x_\tau=:\phi^X_{-\tau}(x)\,\,\,\,\,\,\,\,(x\in X,\,m\in M,
\,\tau\in \mathbb{R}).
$$

 \section{Near-graph asymptotics}

\subsection{Preliminaries}

 \subsubsection{Heisenberg local coordinates}
Our scaling asymptotics 
are best expressed in a Heisenberg local coordinate system 
(HLCS) for $X$ centered at some $x\in X$, 
\begin{equation}
\gamma_x: (\theta,\mathbf{v})\in (-\pi,\pi)\times B_{2d}(\mathbf{0},\delta)\mapsto x+ (\theta,\mathbf{v})\in X;
\end{equation}
here $B_{2d}(\mathbf{0},\delta)\subseteq \mathbb{R}^{2d}$
is the open ball centered at the origin and of radius $\delta>0$. 
The additive notation is from \cite{sz},
where HLCS's were first defined and discussed.

In a HLCS the standard $S^1$-action is expressed by a translation
in the angular coordinate $\theta$, and $\gamma_x$ comes with a built-it unitary isomorphism 
$T_xX\cong \mathbb{R}\oplus T_mM\cong \mathbb{R}\oplus \mathbb{C}^{d}$, where $m=\pi(x)$; 
the first summand $\mathbb{R}\oplus \{\mathbf{0}\}$ corresponds
to the vertical tangent space, and the second, $\{0\}\oplus \mathbb{C}^{d}$, to the horizontal one.
We shall also set $x+\mathbf{v}=:x+(0,\mathbf{v})$. 

Given a system of HLC centered at $x$, if $m=\pi(x)$ then
$\mathbf{v}\mapsto m+\mathbf{v}=\pi (x+\mathbf{v})$ is a system of \textit{preferred}
local coordinates on $M$ centered at $m$ in the sense of \cite{sz}; thus the unitary structure 
of $T_mM$ corresponds to the standard one on $\mathbb{C}^d$.
With this understanding, in the notation $x+\mathbf{v}$
we may assume either $\mathbf{v}\in T_mM$ or $\mathbf{v}\in \mathbb{R}^{2d}\cong \mathbb{C}^d$. 


Heisenberg local coordinates make apparent the universal nature of the near diagonal scaling
asymptotics of the equivariant Szeg\"{o} kernels $\Pi_k$ \cite{sz}, as well as
of the near graph scaling asymptotics of 
$U_{\tau_0,k}$ \cite{paoletti_intjm}. The phase of the latter is determined by certain
Poincar\'{e} type data, as we now recall.

 \subsubsection{Poincar\'{e} type data}
\label{subsctn:poincare}
As in \cite{paoletti_intjm},
the Poincar\'{e} type data that controls the phase in our asymptotic expansions 
is encoded in certain quadratic forms on the tangent bundle of $M$.
Suppose $x\in X$, and set $x_{\tau_0}=:\phi^X_{-\tau_0}(x)$,
$m=:\pi(x)$, $m_{\tau_0}=:\phi^M_{-\tau_0}(m)=\pi(x_{\tau_0})$.
Let us choose HLCS's on $X$ centered at $x$ and $x_{\tau_0}$,
respectively.
This determines unitary isomorphisms $T_mM\cong \mathbb{C}^d$, 
$T_{m_{\tau_0}}M\cong \mathbb{C}^d$, under which
the differential $d_m\phi^M_{-\tau_0}:T_mM\rightarrow T_{\tau_0}M$
is represented by a $2d\times 2d$ symplectic matrix $A$. 

\begin{defn}
 \label{defn:poicare_data}
Let $A$ be a symplectic matrix with polar decomposition $A=O\,P$. 
Thus $O$ is symplectic and orthogonal (hence unitary) and $P$ is
symplectic and symmetric positive definite. Also, let 
$$
J_0=:\left(\begin{matrix}
     0&-I_d\\
I_d&0
    \end{matrix}\right)
$$
be the matrix of the standard complex structure on $\mathbb{R}^{2d}\cong \mathbb{C}^d$.
Let us define:
$$
\mathcal{Q}_A=:I+P^2,\,\,\,\,\mathcal{P}_A=:O\,\mathcal{Q}_A^{-1}\,O^t,\,\,\,\,
\mathcal{R}_A=:O\,\left(I-P^2\right)\,\mathcal{Q}_A^{-1}\,J_0\,O^t.
$$
Then $\mathcal{Q}_A$, $\mathcal{P}_A$ and $\mathcal{R}_A$ are symmetric matrices. 
Also, we shall set:
\begin{equation}
 \label{eqn:defn_nu_A}
 \nu_A=:\sqrt{\det (\mathcal{Q}_A)}.
\end{equation}

Finally, we shall define a quadratic form $\mathcal{S}_A:\mathbb{R}^{2d}\times \mathbb{R}^{2d}\rightarrow \mathbb{C}$ by setting
\begin{equation}
 \label{eqn:defn_S_A}
\mathcal{S}_A(\mathbf{u},\mathbf{w})=:-L_A(\mathbf{u},\mathbf{w})^t\,\left[\mathcal{P}_A+\frac i2\,\mathcal{R}_A\right]\,
L_A(\mathbf{u},\mathbf{w})-i\,\omega_0(A\mathbf{u},\mathbf{w}),
\end{equation}
where $L_A(\mathbf{u},\mathbf{w})=:A\,\mathbf{u}-\mathbf{w}$,
and $\omega_0(\mathbf{r},\mathbf{s})=:-\mathbf{r}^t\,J_0\,\mathbf{s}$ is the standard symplectic structure on $\mathbb{R}^{2d}$.

\end{defn}
 
These Definitions may be transferred to our geometric setting, as follows.

\begin{defn}
\label{defn:quadratic_form_along_graph}
Let us think of $(\mathbf{u},\,\mathbf{w})\in \mathbb{R}^{2d}\times \mathbb{R}^{2d}$ as representing
$(\mathtt{u},\,\mathtt{w})\in T_mM\times T_{m_{\tau_0}}M$ in the given choice of 
HLCS's. Since
$(\mathtt{u},\,\mathtt{w})\mapsto\mathcal{S}_A(\mathbf{u},\,\mathbf{w})$ is invariant under a change
of HLCS's as above, it yields a well-defined quadratic form
$$
\mathcal{S}_{\tau_0,m}:T_mM\times T_{m_{\tau_0}}M\longrightarrow \mathbb{C}.
$$
Similarly, with $\nu_A$ as in (\ref{eqn:defn_nu_A}), we shall set
\begin{equation}
\label{eqn:determinant_nu}
\nu(\tau_0,m)=:\nu_A.
\end{equation}
\end{defn}

\begin{rem}
 In the following arguments, a choice of HLCS's will be always implicit;
we shall systematically abuse notation and avoid distinguishing between
$\mathcal{S}_{\tau_0,m},\,\mathtt{u},\,\mathtt{w}...$ 
and $\mathcal{S}_{A},\,\mathbf{u},\,\mathbf{w}...$, and similarly
for $\mathfrak{Q}_{\tau_0,m}$ and $\mathfrak{Q}_{A}$ to be introduced below.
\end{rem}

\subsection{The local scaling asymptotics}

Let us consider the generalizations of Theorems 1.2 and 1.4 in \cite{paoletti_intjm} 
to rescaled times. Given $\tau_0,\,\tau\in \mathbb{R}$ we shall set 
$\tau_k=:\tau_0+\tau/\sqrt{k}$ for $k=1,2,\ldots$.
Furthermore, we shall denote by
$$
\mathrm{dist}_X:X\times X\rightarrow \mathbb{R}\,\,\,\,\,\mathrm{and}\,\,\,\,\,
\mathrm{dist}_M:M\times M\rightarrow \mathbb{R}
$$
the Riemannian distance functions on $X$ amd $M$, respectively.
Since $\pi$ is a Riemannian submersion and its fibers are the $S^1$-orbits in $X$,
we have $\mathrm{dist}_X\left(x,S^1\cdot x'\right)=\mathrm{dist}_M\big(\pi(x),\pi(x')\big)$
$\forall \,x,x'\in X$.

\subsubsection{Off-graph rapid decay}

To begin with, we have

\begin{prop}
 \label{prop:rapid_decrease}
Suppose $\tau_0\in \mathbb{R}$ and choose 
$C,\,E,\,\,\epsilon>0$.
Then, uniformly for 
$$
\mathrm{dist}_X\left(y,S^1\cdot x_{\tau_0}\right)\ge C\,k^{-7/18}
\,\,\,\,\,\,\,\,\,\mathit{and}\,\,\,\,\,\,\,\,\,
|\tau|\le E\,k^{1/9-\epsilon},
$$
we have
$U_{\tau_k}(x,y)=O\left(k^{-\infty}\right)$ as $k\rightarrow +\infty$.
\end{prop}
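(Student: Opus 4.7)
The plan is to reduce Proposition \ref{prop:rapid_decrease} to the fixed-time off-graph rapid decay estimate (Theorem 1.2 of \cite{paoletti_intjm}, applied at $\tau=\tau_0$), by arguing that under the quantitative bound $|\tau|\le E\,k^{1/9-\epsilon}$ the graph of $\phi^X_{-\tau_k}$ lies in a shrinking tube around the graph of $\phi^X_{-\tau_0}$. The hypothesis $\mathrm{dist}_X(y,S^1\cdot x_{\tau_0})\ge C k^{-7/18}$ then transfers to an analogous bound $\mathrm{dist}_X(y,S^1\cdot x_{\tau_k})\ge (C/2)\,k^{-7/18}$, which is still well inside the rapid-decay regime of the Szeg\"o kernel.

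First I would express $U_{\tau_k,k}(x,y)$ as an oscillatory integral. Using the Boutet de Monvel--Sj\"ostrand parametrix for $\Pi$, the $S^1$-averaging $\Pi_k=(2\pi)^{-1}\int_{-\pi}^{\pi}e^{-ik\theta}\,r_\theta^*\Pi\,d\theta$, and the Toeplitz structure of $R_{\tau_k,k}$ from Definition \ref{defn:toeplitz_operator}, one obtains a representation of the same form as in \cite{paoletti_intjm}, with a complex phase $\Psi_{\tau_k}$ satisfying $\mathrm{Im}\,\Psi_{\tau_k}\ge 0$ and vanishing to second order precisely on the graph locus $y\in S^1\cdot x_{\tau_k}$.

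Next I would quantify the $\tau$-shift of this graph. Since $\tau_k-\tau_0=\tau/\sqrt{k}$ tends to $0$ uniformly on the given range, $\tau_k$ lies in a fixed compact neighborhood of $\tau_0$ for all $k$; on such a neighborhood, smoothness of $\phi^X$ together with compactness of $X$ yields a constant $L=L(\tau_0)$ such that
\[
\mathrm{dist}_X\bigl(x_{\tau_k},x_{\tau_0}\bigr)\le L\,\frac{|\tau|}{\sqrt{k}}\le L E\,k^{1/9-\epsilon-1/2}=L E\,k^{-7/18-\epsilon}.
\]
The triangle inequality and the $S^1$-invariance of $\mathrm{dist}_X$ then give
\[
\mathrm{dist}_X\bigl(y,S^1\cdot x_{\tau_k}\bigr)\ge C\,k^{-7/18}-L E\,k^{-7/18-\epsilon}\ge \tfrac{C}{2}\,k^{-7/18}\qquad (k\gg 0).
\]
On this region the fixed-time analysis applies: the standard off-diagonal Gaussian bound $|\Pi_k(z,w)|\lesssim k^d\,e^{-c\,k\,\mathrm{dist}_X(z,w)^2}$ gives decay $\lesssim e^{-ck^{2/9}}$; equivalently, $N$ non-stationary integrations by parts in the internal variables of the oscillatory integral each gain a factor $\lesssim k^{-1}\cdot (k^{-7/18})^{-2}=k^{-2/9}$, producing the required $O(k^{-\infty})$ bound.

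The principal obstacle is uniformity in $\tau$ throughout the range $|\tau|\le E\,k^{1/9-\epsilon}$, which itself grows with $k$. The exponent $1/9$ is however tailor-made so that $|\tau|/\sqrt{k}=o(k^{-7/18})$, making $\tau_k\to \tau_0$ as $k\to\infty$; consequently every $\mathcal{C}^\infty$ seminorm of the amplitude and phase in the oscillatory integral stays uniformly bounded on this shrinking parameter range, and the constants implicit in the fixed-time rapid-decay estimates of \cite{paoletti_intjm} remain uniform on a small $\tau_0$-neighborhood. This is the observation that converts the fixed-time statement into the rescaled-time one.
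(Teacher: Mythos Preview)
Your argument is correct and follows essentially the same route as the paper: establish via a Lipschitz bound and the triangle inequality that $\mathrm{dist}_X(y,S^1\cdot x_{\tau_k})\ge (C/2)\,k^{-7/18}$ for $k\gg 0$, then invoke Theorem~1.2 of \cite{paoletti_intjm} at the varying time $\tau_k$ (which stays in a bounded interval, so the uniformity there applies). The paper carries out the distance estimate on $M$ rather than on $X$, but since $\mathrm{dist}_X(y,S^1\cdot x_{\tau'})=\mathrm{dist}_M(\pi(y),m_{\tau'})$ this is equivalent to your version. Your additional discussion of the oscillatory-integral representation, Gaussian off-diagonal bounds, and integration by parts is not needed here, since Theorem~1.2 of \cite{paoletti_intjm} is used as a black box; the paper's proof is correspondingly shorter.
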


\begin{proof}
 By Theorem 1.2 of \cite{paoletti_intjm}, if $\tau'$ varies in a bounded interval
 then
 $U_{\tau'}(x,y)=O\left(k^{-\infty}\right)$ as $k\rightarrow +\infty$, 
 uniformly for
 $\mathrm{dist}_X\left(y,S^1\cdot x_{\tau'}\right)>C'\,k^{-7/18}$. 
 Thus we need only prove
 that under the hypothesis above $\mathrm{dist}_X\left(y,S^1\cdot x_{\tau_k}\right)>C'\,k^{-7/18}$,
 for all $k\gg 0$ and some constant $C'>0$ (independent of $k$).
 
 There exists $D>0$ such that $\forall\, m'\in M$ and $\tau'\in \mathbb{R}$ we have
 $$
 \mathrm{dist}_M\left(m',m'_{\tau'}\right)\le D\,|\tau'|,
 $$
 whence in the given hypothesis
\begin{equation}
 \label{eqn:bound_distance_tau_k}
 \mathrm{dist}_M\left(m'_{\tau_0},m'_{\tau_k}\right)\le D\,|\tau|/\sqrt{k}=
O\left(k^{1/9-\epsilon-1/2}\right)=o\left(k^{-7/18}\right),
\end{equation}
uniformly for $m'\in M$.

 Now if $m=\pi(x)$, $n=\pi(y)$ we have $\mathrm{dist}_X\left(y,S^1\cdot x_{\tau'}\right)=
 \mathrm{dist}_M(n,m_{\tau'})$. Therefore,
 \begin{eqnarray*}
 \mathrm{dist}_X\left(y,S^1\cdot x_{\tau_k}\right) &=&\mathrm{dist}_M\left(n,m_{\tau_k}\right) \nonumber\\
 &\ge&\mathrm{dist}_M\left(n,m_{\tau_0}\right) -\mathrm{dist}_M\left(m_{\tau_0},m_{\tau_k}\right)>
 \dfrac{C}{2}\,k^{-7/18}
 \end{eqnarray*}
if $k\gg 0$. This completes the proof.

\end{proof}

\subsubsection{The near-graph asymptotic expansion}

Now we shall revisit Theorem 1.4 of \cite{paoletti_intjm},
and show how the line of argument in its proof may adapted 
to include scaling in the time variable, and remove the 
transversality assumption on the displacement vectors. 

\begin{thm}
 \label{thm:local_scaling_asymptotics_time}
Suppose $\tau_0\in \mathbb{R}$ and $E>0$. Consider $x\in X$ 
and let $m=:\pi(x)$.
Choose HLCS's on $X$ centered at $x$ and $x_{\tau_0}$, respectively. 
Then, uniformly in 
$(\mathbf{u},\mathbf{w},\tau)\in T_mM\times T_{m_{\tau_0}}M\times \mathbb{R}$ 
with $\|\mathbf{u}\|,\,\|\mathbf{w}\|,\,|\tau|
\le E\,k^{1/9}$,
the following asymptotic
expansion holds as $k\rightarrow+\infty$:
\begin{eqnarray*}
\lefteqn{U_{\tau_0+\frac{\tau}{\sqrt{k}}}\left(x+\dfrac{\mathbf{u}}{\sqrt{k}},
x_{\tau_0}+\dfrac{\mathbf{w}}{\sqrt{k}}\right)}\\
&\sim& e^{i\tau\,\sqrt{k}\,f(m)}\,
\dfrac{2^d}{\nu(\tau_0,m)}\,\left(\frac k\pi\right)^d
\\
&&\cdot
e^{\mathcal{S}_{\tau_0,m}\big(\mathbf{u},\tau\,\upsilon_f(m)+\mathbf{w}\big)+i\,\tau\,\omega_m\big(\upsilon_f(m),\mathbf{w}\big)}
\left[\varrho_{\tau_0}(m)+\sum_{j\ge 1}k^{-j/2}\,b_j(m,\tau,\mathbf{u},\mathbf{w})\right],
\end{eqnarray*}
where each $b_j$ is $\mathcal{C}^\infty$ and 
a polynomial in $(\tau,\mathbf{u},\mathbf{w})$ of joint degree $\mathrm{deg}(b_j)\le 3j$.
\end{thm}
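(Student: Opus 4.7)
My proof will build directly on Theorem 1.4 of \cite{paoletti_intjm}, whose near-graph scaling asymptotics give the fixed-time version of the expansion, and incorporate the time scaling $\tau/\sqrt{k}$ by a separate Taylor analysis of the contact flow. The starting point is the identity
$U_{\tau_k,k}(x,y)=R_{\tau_k,k}\bigl[\Pi_k\bigl(\phi^X_{-\tau_k}(\cdot),y\bigr)\bigr](x)$,
together with the semigroup decomposition $\phi^X_{-\tau_k}=\phi^X_{-\tau/\sqrt{k}}\circ\phi^X_{-\tau_0}$. The Szeg\"{o} kernel $\Pi$ will be used in its Boutet de Monvel--Sjöstrand oscillatory-integral form, and the $k$-th Fourier component $\Pi_k$ will be represented by the usual $(u,\theta)$-integral with $t=ku$, as in \cite{sz}, \cite{paoletti_intjm}. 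The small parameter $\tau/\sqrt{k}$ will be treated as a perturbation of the contact flow, exploiting formula \eqref{eqn:contact_vector_field}.

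First, I would Taylor expand $\phi^X_{-\tau/\sqrt{k}}$ around the identity: up to order $k^{-1/2}$ it contributes $-(\tau/\sqrt{k})\,\widetilde{\upsilon}_f=-(\tau/\sqrt{k})(\upsilon_f^\sharp-f\,\partial_\theta)$. Applied to a point $x+\mathbf{u}/\sqrt{k}$ expressed in HLCS at $x$, this produces an angular shift of $+\tau f(m)/\sqrt{k}$ and a horizontal shift of $-\tau\upsilon_f(m)/\sqrt{k}$, up to $O(k^{-1})$. The angular shift can be extracted from $\Pi_k$ by $S^1$-equivariance, $\Pi_k(r_{e^{i\eta}}z,w)=e^{ik\eta}\Pi_k(z,w)$, yielding precisely the leading scalar factor $e^{i\tau\sqrt{k}\,f(m)}$ (recalling that $f$ is preserved by its own Hamiltonian flow, so $f(m_{\tau_0})=f(m)$). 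What remains is a quantity of the form $R_{\tau_k,k}\bigl[\Pi_k(\phi^X_{-\tau_0}(\cdot +\mathbf{u}/\sqrt{k}-\tau\upsilon_f/\sqrt{k}+O(k^{-1})),\,y\bigr)\bigr](x)$, which, after the identification $A\upsilon_f(m)=\upsilon_f(m_{\tau_0})$, is of exactly the kind treated by Theorem 1.4 of \cite{paoletti_intjm}, but with the effective displacement $\mathbf{w}$ on the target side replaced by $\tau\upsilon_f(m)+\mathbf{w}$.

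Next, I would apply the stationary phase argument of \cite{paoletti_intjm} to the resulting $(u,\theta)$-oscillatory integral, now carrying the additional small parameter $\tau/\sqrt{k}$ through the Taylor expansions of the phase and amplitude. The quadratic part of the resulting phase is, by inspection of Definition \ref{defn:poicare_data}, exactly $\mathcal{S}_{\tau_0,m}(\mathbf{u},\,\tau\upsilon_f(m)+\mathbf{w})$. The extra summand $i\tau\,\omega_m\bigl(\upsilon_f(m),\mathbf{w}\bigr)$ arises from the symplectic twist built into HLCS: the combined action of the angular and horizontal components of $\widetilde{\upsilon}_f$ on $\Pi_k$ mixes with the pre-existing displacement $\mathbf{w}/\sqrt{k}$ through the universal Heisenberg phase, producing the mixed term $i\,\tau\,\omega_0\bigl(\upsilon_f(m),\mathbf{w}\bigr)$ not captured by the substitution $\mathbf{w}\mapsto\tau\upsilon_f(m)+\mathbf{w}$ alone. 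The polynomial nature of $b_j$, with joint degree $\le 3j$ in $(\tau,\mathbf{u},\mathbf{w})$, follows from the standard structure of the stationary phase expansion: each factor $k^{-j/2}$ either arises from an extra $\frac{1}{\sqrt{k}}$ in the Taylor expansion of the smooth amplitude (evaluated at points shifted by $\mathbf{u}/\sqrt{k}$, $\mathbf{w}/\sqrt{k}$, $\tau/\sqrt{k}$) or from a subleading stationary-phase correction acting on a polynomial of controlled degree, the $\le 3j$ bound being the usual one from symbolic calculus. Uniformity for $|\tau|,\|\mathbf{u}\|,\|\mathbf{w}\|\le E\,k^{1/9}$ is obtained by balancing the cubic-in-displacement remainders of the phase against the $k^d$ prefactor, exactly as in the fixed-time argument of \cite{paoletti_intjm}, while Proposition \ref{prop:rapid_decrease} ensures that the off-graph contributions to the $(u,\theta)$-integral are negligible.

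\textbf{Main obstacle.} The hardest part will be to pin down the precise origin of the correction $i\tau\,\omega_m(\upsilon_f(m),\mathbf{w})$ and to verify that the naive substitution $\mathbf{w}\mapsto\tau\upsilon_f(m)+\mathbf{w}$ in $\mathcal{S}_{\tau_0,m}$, combined with the angular phase $e^{i\tau\sqrt{k}\,f(m)}$, accounts for every leading contribution of the $\tau/\sqrt{k}$ perturbation of the flow. This requires careful bookkeeping of the second-order Taylor terms of $\phi^X_{-\tau/\sqrt{k}}$ that survive the stationary phase computation, together with the fact that the horizontal lift $\upsilon_f^\sharp$ in HLCS is not quite parallel transport, the discrepancy being measured by $\omega_m$. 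A subsidiary technicality is the removal of the transversality hypothesis on the displacement vectors from \cite{paoletti_intjm}; I expect this to follow from the observation that once all displacement directions are rephrased inside the horizontal tangent space via HLCS, the quadratic phase $\mathcal{S}_A$ is nondegenerate in the combinations $A\mathbf{u}-\mathbf{w}$ that actually control the stationary-phase remainder, so no transversality is needed in the final statement.
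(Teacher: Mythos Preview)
Your conceptual reduction is sound: factoring $\phi^X_{-\tau_k}=\phi^X_{-\tau/\sqrt{k}}\circ\phi^X_{-\tau_0}$ and Taylor-expanding the short-time piece does explain, at leading order, both the factor $e^{i\tau\sqrt{k}f(m)}$ (angular component of $\widetilde\upsilon_f$) and the substitution $\mathbf w\mapsto\tau\upsilon_f(m)+\mathbf w$ in $\mathcal S_{\tau_0,m}$ (horizontal component). The paper, however, does \emph{not} proceed this way. It goes back to the oscillatory-integral representation of $U_{\tau_k,k}$ from \cite{paoletti_intjm} with $\tau_k$ built in from the start; after rescaling, the $(t,\theta,u,\vartheta)$-phase becomes $\Psi_\tau=u(\tau f(m)+\theta+\vartheta)-t\theta-\vartheta$, with unique nondegenerate critical point $(1,0,1,-\tau f(m))$ and critical value $\tau f(m)$. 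Stationary phase then produces $e^{i\tau\sqrt{k}f(m)}$ directly, and the remaining Gaussian $\mathbf v$-integral is rewritten (via the identity $\psi_2(A\mathbf v-\tau\upsilon_f,\mathbf w)=\psi_2(A\mathbf v,\tau\upsilon_f+\mathbf w)-i\tau\omega(\upsilon_f,\mathbf w)+\cdots$) to exhibit both $\mathcal S_{\tau_0,m}(\mathbf u,\tau\upsilon_f+\mathbf w)$ and the extra $i\tau\omega_m(\upsilon_f,\mathbf w)$. The transversality hypothesis of \cite{paoletti_intjm} is bypassed not by an abstract nondegeneracy argument, but by the concrete observation that after the translation $\mathbf v=\mathbf v'+\mathbf u$ one has $\Re\mathcal B_\tau\le -\tfrac12\|\mathbf v'\|^2$, which makes the $\mathbf v$-integral absolutely convergent termwise.

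Your route has genuine gaps that would need to be closed before it constitutes a proof. First, the short-time flow is applied to the integration variable $z$ in $R_{\tau_k}(x,z)\Pi_k(\phi^X_{-\tau_k}(z),y)$, not to $x+\mathbf u/\sqrt{k}$; the angular shift therefore involves $f(\pi(z))$ rather than $f(m)$, and the $O(k^{-1/2})$ discrepancy feeds into the lower-order terms and must be tracked. Second, you never account for the $\tau$-dependence of $R_{\tau_k}$ itself: $\varrho_{\tau_k}=\varrho_{\tau_0}+O(\tau/\sqrt k)$, and the full symbol of $R_{\tau_k}$ must be Taylor-expanded too. Third, and most seriously, Theorem~1.4 of \cite{paoletti_intjm} cannot be invoked as a black box: it carries the transversality hypothesis you wish to drop, and in any case obtaining the precise degree bound $\deg b_j\le 3j$ requires re-entering the stationary-phase machinery (the $3j$ comes from the third-order remainder $R_3$ in the phase interacting with the second-order operator $\mathfrak R$, and your perturbative scheme would have to reproduce this bookkeeping). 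Your sketch for removing transversality (``$\mathcal S_A$ is nondegenerate in $A\mathbf u-\mathbf w$'') is not the mechanism the paper uses and is not obviously sufficient on its own. In short: your heuristic correctly predicts the leading term, but to prove the theorem you would end up redoing essentially the same oscillatory-integral computation the paper performs directly.
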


\begin{proof}
Before delving into the proof, it is in order to recall the general scheme of the arguments in \cite{paoletti_intjm}.

For $\tau'\in \mathbb{R}$, let us set 
$\Pi_{\tau'}=:\left(\phi^X_{-\tau'}\right)^*\circ \Pi$; in terms of Schwartz kernels, 
$$
\Pi_{\tau'}=\left(\phi^X_{-\tau'}\times \mathrm{id}_X\right)^*(\Pi).
$$ 
Then by definition
$U_{\tau'}=R_{\tau'}\circ \Pi_{\tau'}$.
Since by hypothesis these operators are $S^1$-invariant, passing to $S^1$-isotypes we have:
\begin{equation}
 \label{eqn:isotypes}
U_{\tau',k}=R_{\tau',k}\circ \Pi_{\tau',k}=R_{\tau',k}\circ \Pi_{\tau'}
=R_{\tau'}\circ \Pi_{\tau',k}.
\end{equation}
In terms of Schwartz kernels, 
\begin{equation}
 \label{eqn:isotypes_schwartz}
U_{\tau',k}(x,y)=\int_XR_{\tau'}(x,z)\,\Pi_{\tau',k}(z,y)\,\mathrm{dV}_X(z),
\end{equation}
where $\Pi_{\tau',k}(z,y)=\Pi_k(z_{\tau'},y)$.
By Theorem 1.2 of \cite{paoletti_intjm}, $U_{\tau',k}(x,y)=O\left(k^{-\infty}\right)$
as $k\rightarrow +\infty$, 
unless $y$ belongs to a small (and shrinking) $S^1$-invariant neighborhood of $x_{\tau'}$.
Also, as argued in the proof of the same Theorem, 
only a negligible contribution to the asymptotics is lost in (\ref{eqn:isotypes_schwartz})
if integration in $z$ is restricted to a small (and shrinking)  
invariant neighborhood of $x$, so that
$z_{\tau'}$ varies in a small invariant neighborhood of $x_{\tau'}$.

In these neighborhood, perhaps up to smoothing operators irrelevant to the asymptotics,
$R_{\tau'}$ and $\Pi_{\tau'}$ may be represented as FIO's. 
In fact, by the theory of  \cite{bdm_sj} (see also the discussion in \cite{sz})
in the neighborhood of $S^1\cdot x$ and $S^1\cdot x_{\tau_0}$ we can write, respectively,
\begin{equation}
 \label{eqn:Pi_as_FIO}
\Pi\left(x',x''\right)=\int_0^{+\infty}e^{iu \psi(x',x'')}\,s\left(u,x',x''\right)\,\mathrm{d}u,
\end{equation}
and
\begin{equation}
 \label{eqn:R_as_FIO}
R_{\tau'}\left(y',y''\right)=\int_0^{+\infty}e^{it \psi(y',y'')}\,
a_{\tau'}\left(t,y',y''\right)\,\mathrm{d}t,
\end{equation}
where $\psi$ is a complex phase of positive type, 
while $s$ and $a_{\tau'}$ are semiclassical symbols of degree
$d$. More precisely, there are asymptotic expansions
\begin{eqnarray} 
 \label{eqn:asymptotic_exp_phase_szego}
 s\left(u,x',x''\right)&\sim& \sum_{j\ge 0}u^{d-j}\,s_j\left(x',x''\right),\\
 \label{eqn:asymptotic_exp_phase_toeplitz}
 a_{\tau'}\left(t,y',y''\right)&\sim& \sum_{j\ge 0}t^{d-j}\,a_{\tau' j}\left(y',y''\right).
\end{eqnarray}

Suppose now that $\tau'\sim \tau_0$, so that $x_{\tau'}\sim x_{\tau_0}$.
In HLCS's centered at $x$ and $x_{\tau_0}$, respectively, we have:
\begin{equation}
 \label{eqn:leading_semiclassical_symbols}
s_0(x,x)=\dfrac{1}{\pi^d}\,\,\,\,\mathrm{and}\,\,\,\,
a_{\tau' 0}(x_{\tau_0},x_{\tau_0})=\dfrac{1}{\pi^d}\,\varrho_{\tau'}(m).
\end{equation}

Let us now apply these considerations with $\tau'=\tau_k$, where
$\tau_k=:\tau_0+\tau/\sqrt{k}$.
Insering (\ref{eqn:Pi_as_FIO}) and (\ref{eqn:R_as_FIO}) in (\ref{eqn:isotypes_schwartz}),
and following the arguments leading to 3.4 in \cite{paoletti_ijm}, we obtain
\begin{eqnarray}
 \label{eqn:analogue_of-3.4}
\lefteqn{U_{\tau_k}\left(x+\frac{\mathbf{u}}{\sqrt{k}},x_{\tau_0}+\frac{\mathbf{w}}{\sqrt{k}}\right)}\\
&\sim&\dfrac{k^2}{2\pi}\,\int_{1/E}^{E}\,\int_{1/E}^{E}\,\int_{-\epsilon}^\epsilon
\,\int_{-\epsilon}^\epsilon\,\int_{\mathbb{C}^d} \,e^{ik\Psi_1'}\,\gamma_k(z)\,
\mathcal{A}_1'\cdot\mathcal{V}(\theta,\mathbf{v})\,\mathrm{d}t\,\mathrm{d}u\,\mathrm{d}\vartheta
\,\mathrm{d}\theta\,\mathrm{d}\mathbf{v}.\nonumber
\end{eqnarray}

Let us briefly clarify (\ref{eqn:analogue_of-3.4}).
\begin{itemize}
 \item The factor $k^2$ is due to a change of integration variables 
$t\mapsto k \,t$ and $u\mapsto k\,u$.
\item $\vartheta$ is the angular coordinate on the structure group $S^1$ of $X$.
\item $z=x+(\theta,\mathbf{v})$ is a HLCS on $X$ centered at $x$.
\item $\gamma_k(z)=:\gamma_1\left(k^{7/18}\,\|\mathbf{v}\|\right)$, where
$\gamma_1\in \mathcal{C}^\infty_0\left(\mathbb{R}^{2d}\right)$ is a bump function
$\equiv 1$ in an neighborhood of the origin. Thus integration in $\mathbf{v}$
is thus over a ball of radius $O\left(k^{-7/18}\right)$.
\item $\mathcal{A}_1'=\varrho (\theta,\vartheta)\cdot \mathcal{A}_1$, 
where $\varrho$ and $\mathcal{A}_1$ are as follows.
\begin{enumerate}
 \item $\varrho(\theta,\vartheta)$ is a bump function on $\mathbb{R}^2$, supported where 
$\|(\theta,\vartheta)\|\le \epsilon$ and  $\equiv 1$ in a neighborhood of the origin.
\item With $s$ and $a_\tau$ as in (\ref{eqn:Pi_as_FIO}) and (\ref{eqn:R_as_FIO}), we have
\begin{equation}
 \label{eqn:defn_A_1}
\mathcal{A}_1=a_{\tau_k}\left(k\,t,x+\frac{\mathbf{u}}{\sqrt{k}},z\right)\,
s\left(k\,u,r_\vartheta(z_{\tau_k}),x_{\tau_0}+\frac{\mathbf{w}}{\sqrt{k}}\right).
\end{equation}
\end{enumerate}
\item The phase $\Psi_1'$ is given by
\begin{eqnarray}
 \label{eqn:phase_psi_1'}
\Psi_1'&=&t\,\psi\left(x+\dfrac{\mathbf{u}}{\sqrt{k}},x+(\theta,\mathbf{v})\right)\\
&&+u\,\psi\left(\phi_{-\tau_k}\big(x+(\theta+\vartheta,\mathbf{v})\big),
x_{\tau_0}+\dfrac{\mathbf{w}}{\sqrt{k}}\right)-\vartheta.			\nonumber																															
\end{eqnarray}

\end{itemize}

In rescaled HLC, 
$z=x+\big(\theta, \mathbf{v}/\sqrt{k}\big)$, we obtain (see
(6.2) of \cite{paoletti_ijm}):
\begin{eqnarray}
 \label{eqn:analogue_of_6.2}
\lefteqn{U_{\tau_k,k}\left(x+\dfrac{\mathbf{u}}{\sqrt{k}},x+\dfrac{\mathbf{w}}{\sqrt{k}}\right)}\\
&\sim&\dfrac{k^{2-d}}{2\pi}\,\int_{\mathbb{C}^d}
\,\left[\int_{1/E}^E\,\int_{1/E}^E\, \int_{-\epsilon}^\epsilon  \, \int_{-\epsilon}^\epsilon
\,e^{ik\,\Psi_2}\,\mathcal{A}_2\cdot \mathcal{V}\left(\theta, \frac{\mathbf{v}}{\sqrt{k}}\right)\,
\,\mathrm{d}t\,\mathrm{d}u\,\mathrm{d}\vartheta\,\mathrm{d}\theta\right]\,\mathrm{d}\mathbf{v}
.\nonumber                                
\end{eqnarray}
Here $\Psi_2$ is $\Psi_1'$ in rescaled coordinates, and
$$
\mathcal{A}_2=:\gamma_1\left(k^{-1/9}\,\|\mathbf{v}\|\right)\cdot \mathcal{A}_1'',
$$
where $\mathcal{A}_1''$ is $\mathcal{A}_1'$ in rescaled coordinates.

In view of (\ref{eqn:bound_distance_tau_k}),  the argument in the proof of Lemma 6.1 of \cite{paoletti_ijm}
still applies, and we get:

\begin{lem}
 \label{lem:analogue_of_6_1}
There exist constants $C_1,\,C_2>0$ such that the locus where $|\theta|>C_1\,k^{-7/18}$ or
$|\vartheta|>C_2\,k^{-7/18}$ contributes negligibly to the asymptotics
of (\ref{eqn:analogue_of_6.2}).
\end{lem}

Thus we can multiply the integrand 
in (\ref{eqn:analogue_of_6.2}) by
a cut-off of the form 
$\gamma_1'\left(k^{7/18}\,(\theta,\vartheta)\right)$, with $\gamma_1'\equiv 1$ near the origin, 
without changing the asymptotics.
In rescaled angular coordinates,
$(\theta,\vartheta)\mapsto \big(\theta/\sqrt{k},\vartheta/\sqrt{k}\big)$,
this becomes
$\gamma_1'\left(k^{-1/9}\,(\theta,\vartheta)\right)$, so that 
integration in $\mathrm{d}\theta\,\mathrm{d}\vartheta$ is now over a 
ball of radius $O\left(k^{1/9}\right)$ centered at the origin
in $\mathbb{R}^2$.

We then get (\textit{cfr} (6.10) of \cite{paoletti_ijm}):
\begin{eqnarray}
 \label{eqn:analogue_of_6_10}
U_{\tau_k,k}\left(x+\frac{\mathbf{u}}{\sqrt{k}},
x_{\tau_0}+\frac{\mathbf{w}}{\sqrt{k}}\right)&=&\int_{\mathbb{C}^d}\,I_k(\tau,\mathbf{u},\mathbf{w},\mathbf{v})\,\mathrm{d}\mathbf{v},
\end{eqnarray}
where
\begin{eqnarray}
\label{eqn:inner_integral}
\lefteqn{I_k(\tau,\mathbf{u},\mathbf{w},\mathbf{v})}\\
&\sim&\dfrac{k^{1-d}}{2\pi}\,
\int_{1/E}^E\,\int_{1/E}^E\,\int_{-\infty}^{+\infty}\,\int_{-\infty}^{+\infty}\,
e^{i\sqrt{k}\,\Psi_\tau\,}\,\mathcal{A}_\tau\cdot \mathcal{V}\left(\dfrac{\theta}{\sqrt{k}},
\dfrac{\mathbf{v}}{\sqrt{k}}\right)\,\mathrm{d}t\,\mathrm{d}u\,
\mathrm{d}\vartheta\,\mathrm{d}\theta.\nonumber
\end{eqnarray}
More precisely, notation in (\ref{eqn:inner_integral}) is as follows.
First, the phase is
\begin{equation}
 \label{eqn:defn_phase_Psi_tau}
\Psi_\tau(t,\theta,u,\vartheta)=:u\,\big(\tau\,f(m)+\theta+\vartheta\big)-t\,\theta-\vartheta.
\end{equation}
The first factor in the amplitude, on the other hand, is
\begin{eqnarray}
 \label{eqn:amplitude_tau}
\mathcal{A}_\tau&=:&\exp \left(i\tau\,\omega_m\big(\upsilon_f(m),A\mathbf{v}\big)-\frac t2 \,\theta^2
-\frac u2\,\big(\tau\,f(m)+\theta+\vartheta\big)^2\right)\nonumber\\
&&\cdot\exp \Big(t\,\psi_2(\mathbf{u},\mathbf{v})+u\,\psi_2\big(A\mathbf{v}-\tau\,\upsilon_f(m),\mathbf{w}\big)\Big)\cdot \mathcal{A}';
\end{eqnarray}
here $\mathcal{A}'$ is $\mathcal{A}_2$ expressed in rescaled coordinates, times a factor of the form 
$e^{ik\,R_3}$, where we have set
\begin{equation}
 \label{eqn:3rd_order_remainder}
R_3=\mathcal{R} _3^{(t,u)}\left(\frac{\mathbf{v}}{\sqrt{k}},\frac{\mathbf{u}}{\sqrt{k}},\frac{\mathbf{w}}{\sqrt{k}},
\frac{\tau}{\sqrt{k}},\frac{\vartheta}{\sqrt{k}},
\frac{\theta}{\sqrt{k}}\right),
\end{equation}
for an appropriate function $\mathcal{R}_3(\cdot,\cdot,\cdot,\cdot,\cdot)$ vanishing to third order at the origin,
and depending on $t$ and $u$.

We can view $I_k(\tau,\mathbf{u},\mathbf{w},\mathbf{v})$ as an oscillatory integral in $\sqrt{k}$ with real phase $\Psi_\tau$.
As discussed in \cite{paoletti_intjm}, $\Psi_\tau(t,\theta,u,\vartheta)$ has a unique stationary point
$P_\tau=:\big(1,0,1,-\tau\,f(m)\big)$, which is non-degenerate and satisfies
$\Psi_\tau(P_\tau)=\tau\,f(m)$. The Hessian matrix at the critical point has determinant $1$ and signature $0$.
The Hessian matrix of $\Psi_\tau$ and its inverse at the critical point are 
\begin{equation}
 \label{eqn:inverse_hessian}
H_{P_\tau}(\Psi_\tau)=\begin{pmatrix}
0&-1&0&0\\
-1&0&1&0\\
0&1&0&1\\
0&0&1&0
                      \end{pmatrix},\,\,\,\,\,\,\,\,H_{P_\tau}(\Psi_\tau)^{-1}=\begin{pmatrix}
0&-1&0&1\\
-1&0&0&0\\
0&0&0&1\\
1&0&1&0
                      \end{pmatrix}.
\end{equation}
In particular, $H_{P_\tau}(\Psi_\tau)$ has zero signature.

Using an
integration by parts in
$\mathrm{d}t\,\mathrm{d}u$ one can see that, perhaps after disregarding a negligible contribution to the asymptotics,
the integrand in (\ref{eqn:inner_integral})  may also be assumed to be compactly supported in $(\theta,\vartheta)$
near the critical point
\cite{paoletti_intjm}.

Applying Taylor expansion, and taking into account the factor
$e^{ik\,R_3}$, in view of (\ref{eqn:asymptotic_exp_phase_szego}), (\ref{eqn:asymptotic_exp_phase_toeplitz}),
and (\ref{eqn:defn_A_1}) (with $\vartheta$ rescaled to $\vartheta/\sqrt{k}$)
we have for the amplitude in (\ref{eqn:analogue_of_6_10}) an asymptotic expansion of the form
\begin{eqnarray}
 \label{eqn:rewrite_A_tau}
\lefteqn{\dfrac{k^{1-d}}{2\pi}\cdot\mathcal{A}_\tau\cdot \mathcal{V}\left(\dfrac{\theta}{\sqrt{k}},
\dfrac{\mathbf{v}}{\sqrt{k}}\right)=
e^{\Theta(\mathbf{v},\mathbf{u},\mathbf{w},\tau,t,\theta,u,\vartheta)}\cdot \mathcal{A}'
\cdot \mathcal{V}\left(\dfrac{\theta}{\sqrt{k}},
\dfrac{\mathbf{v}}{\sqrt{k}}\right)\nonumber}\\
&\sim& k^{d+1}\,e^{\Theta(\mathbf{v},\mathbf{u},\mathbf{w},\tau,t,\theta,u,\vartheta)}\cdot 
\sum_{a\ge 0}k^{-a/2}\,\mathcal{Q}_a^{(t,u)}\left(\mathbf{v},\mathbf{u},\mathbf{w},\tau,\vartheta,\theta\right),
\end{eqnarray}
where
\begin{eqnarray}
 \label{eqn:defn_Theta}
\lefteqn{\Theta(\mathbf{v},\mathbf{u},\mathbf{w},\tau,t,\theta,u,\vartheta)=:-\frac t2 \,\theta^2
-\frac u2\,\big(\tau\,f(m)+\theta+\vartheta\big)^2}\\
&&+i\tau\,\omega_m\big(\upsilon_f(m),A\mathbf{v}\big)
+t\,\psi_2(\mathbf{u},\mathbf{v})+u\,\psi_2\big(A\mathbf{v}-\tau\,\upsilon_f(m),\mathbf{w}\big),\nonumber
\end{eqnarray}
and $\mathcal{Q}_a=\mathcal{Q}_a^{(t,u)}\left(\mathbf{v},\mathbf{u},\mathbf{w},\tau,\vartheta,\theta\right)$ is a polynomial
of degree $\le 3\,a$.

Let us temporarily use the short-hand
$$
\int=:\int_{1/E}^E\,\int_{1/E}^E\,\int_{-\infty}^{+\infty}\,
\int_{-\infty}^{+\infty}.
$$
Applying the stationary phase Lemma for real quadratic phase functions \cite{duist}, we obtain from each summand in the second line
of (\ref{eqn:rewrite_A_tau})
an asymptotic expansion of the form
\begin{eqnarray}
 \label{eqn:asymptotic_expansion_each_summand}
 \lefteqn{k^{1+d-a/2}\,
\,\int
e^{i\sqrt{k}\,\Psi_\tau\,}\,e^{\Theta(\mathbf{v},\mathbf{u},\mathbf{w},\tau,t,\theta,u,\vartheta)}
\,\,\mathcal{Q}_a^{(t,u)}\left(\mathbf{v},\mathbf{u},\mathbf{w},\tau,\vartheta,\theta\right)\,\mathrm{d}t\,\mathrm{d}u\,
\mathrm{d}\vartheta\,\mathrm{d}\theta}\nonumber\\
&\sim&\dfrac{k^{d-a/2}}{\pi^{2d}}\,
e^{i\sqrt{k}\,\tau f(m)+\mathcal{B}_\tau(m,\mathbf{u},\mathbf{w},\mathbf{v})}\nonumber\\
&&\cdot\sum_{b\ge 0}\frac{1}{b!} \,
k^{-b/2}\,\mathfrak{R}^b\left(e^{\Theta}\,\mathcal{Q}_a\right)\big(\mathbf{v},\mathbf{u},\mathbf{w},\tau,1,0,1,-\tau\,f(m)\big),
\end{eqnarray}
where $\mathfrak{R}$ is the second order differential operator defined by the inverse Hessian matrix
in (\ref{eqn:inverse_hessian}):
\begin{equation}
 \label{eqn:second_order_diff_op}
 \mathfrak{R}=:\frac i2\,\begin{pmatrix}
                \partial_t&\partial_\theta&\partial_u&\partial_\vartheta
               \end{pmatrix}\,
               \begin{pmatrix}
0&-1&0&1\\
-1&0&0&0\\
0&0&0&1\\
1&0&1&0
                      \end{pmatrix}\,
                      \begin{pmatrix}
                \partial_t\\ \partial_\theta\\ \partial_u\\ \partial_\vartheta
               \end{pmatrix}.
\end{equation}
In particular, in $\mathfrak{R}$ there are no terms of the form
$\partial_t^2$ or $\partial_u^2$; it then follows from the fact that $\mathcal{Q}_a$ has degree
$\le 3a$ and the expression (\ref{eqn:defn_Theta}) for $\Theta$ that 
\begin{eqnarray}
 \mathfrak{R}^b\left(e^{\Theta}\,\mathcal{Q}_a\right)\big(\mathbf{v},\mathbf{u},\mathbf{w},\tau,1,0,1,-\tau\,f(m)\big)
 =e^{\mathcal{B}_\tau(m,\mathbf{u},\mathbf{w},\mathbf{v})}\,
 \mathcal{S}_{a,b}\big(\mathbf{v},\mathbf{u},\mathbf{w},\tau\big),
\end{eqnarray}
where
\begin{eqnarray}
 \label{eqn:B_tau_convenient}
\lefteqn{\mathcal{B}_\tau(m,\mathbf{u},\mathbf{w},\mathbf{v})=:
\Theta\big(\mathbf{v},\mathbf{u},\mathbf{w},\tau,1,0,1,-\tau\,f(m)\big)}\\
&=&
i\tau\,\omega_m\big(\upsilon_f(m),A\mathbf{v}\big)+\psi_2(\mathbf{u},\mathbf{v})+\psi_2\big(A\mathbf{v}-\tau\,\upsilon_f(m),\mathbf{w}\big)\nonumber\\
&=&i\tau\,\omega_m\big(\upsilon_f(m),\mathbf{w}\big)
+\psi_2(\mathbf{u},\mathbf{v})+\psi_2\big(A\mathbf{v},\tau\,\upsilon_f(m)+\mathbf{w}\big),
\nonumber
\end{eqnarray}
while $\mathcal{S}_{a,b}$ is a polynomial of degree $\le 3a+2b\le 3(a+b)$ (and depending smoothly
on $m$). Given this, we obtain for (\ref{eqn:inner_integral}) an asymptotic expansion of the form
\begin{eqnarray}
 \label{eqn:asymptotic_expansion_I_tau}
\lefteqn{I_k(\tau,\mathbf{u},\mathbf{w},\mathbf{v})}\\
&\sim&\dfrac{k^d}{\pi^{2d}}\,
e^{i\sqrt{k}\,\tau f(m)+\mathcal{B}_\tau(m,\mathbf{u},\mathbf{w},\mathbf{v})}
\cdot\sum_{j\ge 0}k^{-j/2}\,\mathcal{I} _j(m,\tau,\mathbf{u},\mathbf{w},\mathbf{v}),\nonumber
\end{eqnarray}
where each
$\mathcal{I}_j$ is a polynomial in $(\tau,\mathbf{u},\mathbf{w},\mathbf{v})$
of joint degree $\le 3j$ and, recalling (\ref{eqn:leading_semiclassical_symbols}),
the leading order term is $\mathcal{I}_0(m,\tau,\mathbf{u},\mathbf{w},\mathbf{v})=\varrho_{\tau_0}(m)$ .

Let us set 
$\mathbf{v}=\mathbf{v}'+\mathbf{u}$. Then
\begin{eqnarray}
 \label{eqn:traslazione}
\lefteqn{\mathcal{B}_\tau(m,\mathbf{u},\mathbf{w},\mathbf{v})=\mathcal{B}_\tau\left(m,\mathbf{u},\mathbf{w},\mathbf{v}'+\mathbf{u}\right)}\\
&=&i\,\Big[\tau\,\omega_m\big(\upsilon_f(m),A\mathbf{v}\big)-\omega_m(\mathbf{u},\mathbf{v})-
\omega_m\big(A\mathbf{v}'+A\mathbf{u},\tau\,\upsilon_f(m)+\mathbf{w}\big)\Big] \nonumber\\
&&-\frac 12\,\left\|\mathbf{v}'\right\|^2-\frac 12\,\left\|A\mathbf{v}'+A\mathbf{u}-\mathbf{w}-\tau\,\upsilon_f(m)\right\|^2.
\nonumber\end{eqnarray}
Hence, 
$\Re \big(\mathcal{B}_\tau (m,\mathbf{u},\mathbf{w},\mathbf{v})\big)\le -(1/2)\,\left\|\mathbf{v}'\right\|^2$.
Furthermore, $\mathcal{I}_j(\tau,\mathbf{u},\mathbf{w},\mathbf{v}'+\mathbf{u})$ splits as a linear combination
of terms of the form $\tau^\alpha\,\mathbf{u}^\beta\,\mathbf{w}^\gamma\,{\mathbf{v}'}^\delta$, 
where $\alpha+|\beta|+|\gamma|+\delta\le 3j$.
After integration in $\mathrm{d}\mathbf{v}'$ each such term therefore is bounded by
$C_j\,|\tau|^\alpha\,\|\mathbf{u}\|^\beta\,\|\mathbf{w}\|^\gamma\le C_j\,k^{j/3}$, where we have used that in our range
$|\tau|,\,\|\mathbf{u}\|,\,\|\mathbf{w}\|\le E\,k^{1/9}$.
Therefore,
\begin{eqnarray}
 \label{eqn:jth_term_integrated_estimate}
\lefteqn{
\left|k^{-j/2}\,\int_{\mathbb{C}^d}e^{\mathcal{B}_\tau(m,\mathbf{u},\mathbf{w},\mathbf{v})}\,
\mathcal{I}_j(\tau,m,\mathbf{u},\mathbf{w},\mathbf{v})\,\mathrm{d}\mathbf{v}\right|
}\nonumber\\
&\le&
C_j\,k^{-j/2}\,k^{j/3}=C_j\,k^{-j/6}.
\end{eqnarray}

Furthermore, in view of (\ref{eqn:amplitude_tau}), a similar bound holds for
the $J$-th step remainder.
Since integration in $\mathrm{d}\mathbf{v}$ in (\ref{eqn:analogue_of_6_10}) is actually
over a ball centered at the origin and 
of radius $O\left(k^{1/9}\right)$
in $\mathbb{C}^d$,
the expansion (\ref{eqn:asymptotic_expansion_I_tau}) may be integrated term by term.

Going back to (\ref{eqn:analogue_of_6_10}), we conclude that there is an asymptotic expansion of the form
\begin{eqnarray}
 \label{eqn:expansion_phase_int}
\lefteqn{U_{\tau_k,k}\left(x+\frac{\mathbf{u}}{\sqrt{k}},
x_{\tau_0}+\frac{\mathbf{w}}{\sqrt{k}}\right)}\\
&\sim& \dfrac{k^d}{\pi^{2d}}\,e^{i\sqrt{k}\,f(m)}
\cdot\sum_{j\ge 0}k^{-j/2}\int_{\mathbb{C}^d}\,
e^{\mathcal{B}_\tau(m,\mathbf{u},\mathbf{w},\mathbf{v})}\mathcal{I}_j(m,\tau,\mathbf{u},\mathbf{w},\mathbf{v})\,
\mathrm{d}\mathbf{v},\nonumber
\end{eqnarray}
where under the present assumptions
the $j$-th summand is uniformly bounded by $D_j\,k^{-j/2+j/3}=D_j\,k^{-(1/6)j}$.

Using (\ref{eqn:B_tau_convenient}) and 
the arguments from (3.15) to Lemma 3.3 of \cite{paoletti_intjm}, we obtain
for the leading order term:
\begin{eqnarray}
\label{eqn:analogue_of_3.15}
\lefteqn{e^{i\sqrt{k}\,f(m)}\dfrac{k^d}{\pi^{2d}}\,\varrho_{\tau_0}(m)\,\int_{\mathbb{C}^d}\,
e^{\mathcal{B}_\tau(m,\mathbf{u},\mathbf{w},\mathbf{v})}\,
\mathrm{d}\mathbf{v}}\\
&=&e^{i\sqrt{k}\,\tau f(m)+i\tau\,\omega_m\big(\upsilon_f(m),\mathbf{w}\big)}\,\left[\dfrac{k^d}{\pi^{2d}}\,\varrho_{\tau_0}(m)\,\int_{\mathbb{C}^d}\,
e^{\psi_2(\mathbf{u},\mathbf{v})+\psi_2\big(A\mathbf{v},\tau\,\upsilon_f(m)+\mathbf{w}\big)}\,
\mathrm{d}\mathbf{v}\right]\nonumber\\
&=&e^{i\sqrt{k}\,\tau f(m)+i\tau\,\omega_m\big(\upsilon_f(m),\mathbf{w}\big)}\,\left(\frac k\pi\right)^d\,\varrho_{\tau_0}(m)\,
\dfrac{2^d}{\sqrt{\det (Q_A)}}\,
e^{\mathcal{S}_A\big(\mathbf{u},\tau\,\upsilon_f(m)+\mathbf{w}\big)}.\nonumber
\end{eqnarray}

As to the lower order terms, we have for any $j\ge 1$ that the integral in (\ref{eqn:jth_term_integrated_estimate})
is a linear combination of similar integrals, with $\mathcal{I}_j$ replaced by a monomial of the form
$\tau^\alpha\,\mathbf{u}^\beta\,\mathbf{w}^\gamma\,{\mathbf{v}}^\delta$.
Furthermore, we see from (\ref{eqn:B_tau_convenient}) that
\begin{eqnarray}
 \label{eqn:B_tau_convenient-developped}
\mathcal{B}_\tau(m,\mathbf{u},\mathbf{w},\mathbf{v})&=&
i\tau\,\omega_m\big(\upsilon_f(m),\mathbf{w}\big)-i\,\omega_0\big(\mathbf{v},H\big)\nonumber\\
&& -\frac 12\,F -\frac 12\,\mathbf{v}^t\,Q_A\,\mathbf{v}
+\mathbf{v}^t\cdot G
,
\end{eqnarray}
where
\begin{eqnarray*}
F&=&F\big(\tau\,\upsilon_f(m),\mathbf{u},\mathbf{w}\big)=:
\big\|\tau\,\upsilon_f(m)+\mathbf{w}\big\|^2+\|\mathbf{u}\|^2\\
G&=&G\big(\tau\,\upsilon_f(m),\mathbf{u},\mathbf{w}\big)=:\mathbf{u}+A^t\,\big(\tau\,\upsilon_f(m)+\mathbf{w}\big)\\
H&=&H(\tau\,\upsilon_f(m),\mathbf{u},\mathbf{w})\big)=:A^{-1}\mathbf{w}-\mathbf{u}+\tau\,A^{-1}\upsilon_f(m).
\end{eqnarray*}

Let us perform the change of variables
$\mathbf{v}=\mathbf{s}+\mathbf{r}$, with $\mathbf{r}=Q_A^{-1}\,G$. 
We obtain from (\ref{eqn:B_tau_convenient-developped}) that
\begin{eqnarray}
 \label{eqn:B_tau_convenient-developped_1}
\mathcal{B}_\tau(m,\mathbf{u},\mathbf{w},\mathbf{v})&=&
\mathcal{B}_\tau(m,\mathbf{u},\mathbf{w},\mathbf{s}+\mathbf{r})\\
&=&i\tau\,\omega_m\big(\upsilon_f(m),\mathbf{w}\big) -\frac 12\,F 
-i\,\omega_0\big(\mathbf{r},H\big)-\frac 12\,\mathbf{r}^t\,Q_A\,\mathbf{r}\nonumber\\
&&-i\,\omega_0(\mathbf{s},H)-\frac 12\,\mathbf{s}^t\,Q_A\,\mathbf{s}.
\nonumber\end{eqnarray}
Hence, if $\mathcal{F}(\cdot)$ denotes the Fourier transform operator, we have
\begin{eqnarray}
\label{eqn:leading_order_term_other}
\lefteqn{
\int_{\mathbb{R}^{2d}}e^{\mathcal{B}_\tau(m,\mathbf{u},\mathbf{w},\mathbf{v})}\,\mathrm{d}\mathbf{v}
=\int_{\mathbb{R}^{2d}}e^{\mathcal{B}_\tau(m,\mathbf{u},\mathbf{w},\mathbf{s}+\mathbf{r})}\,
\mathrm{d}\mathbf{s}}\\
&=&e^{i\tau\,\omega_m\big(\upsilon_f(m),\mathbf{w}\big)-\frac 12\,F -i\,\omega_0(\mathbf{r},H)-\frac 12\,\mathbf{r}^t\,Q_A\,\mathbf{r}}\,
\int_{\mathbb{R}^{2d}}e^{-i\,\omega_0(\mathbf{s},H)-\frac 12\,\mathbf{s}^t\,Q_A\,\mathbf{s}}\,\mathrm{d}\mathbf{s}\nonumber\\
&=&e^{i\tau\,\omega_m\big(\upsilon_f(m),\mathbf{w}\big)-\frac 12\,F -i\,\omega_0(\mathbf{r},H)-\frac 12\,\mathbf{r}^t\,Q_A\,\mathbf{r}}\,
\cdot (2\pi)^d\,\left.\mathcal{F}\left(e^{-\frac 12\,\mathbf{s}^t\,Q_A\,\mathbf{s}}\right)\right|
_{\xi=-J_0H}\nonumber\\
&=& (2\pi)^d\,\det(Q_A)^{-1}\,
e^{i\tau\,\omega_m\big(\upsilon_f(m),\mathbf{w}\big)
-\frac 12\,F -i\,\omega_0(\mathbf{r},H)-\frac 12\,\mathbf{r}^t\,Q_A\,\mathbf{r}
-\frac 12\,(J_0H)^t\,Q_A^{-1}\,(J_0H)}.\nonumber
\end{eqnarray}

Now any monomial $\tau^\alpha\,\mathbf{u}^\beta\,{\mathbf{w}}^\gamma\,\mathbf{v}^\delta$ 
with $\alpha+|\beta|+|\gamma|+|\delta|\le 3j$
is a linear
combination of monomials of the form 
$\tau^{\alpha'}\,\mathbf{u}^{\beta'}\,{\mathbf{w}}^{\gamma'}\,\mathbf{s}^{\delta'}$ with
$\alpha'+|\beta'|+|\gamma'|+|\delta'|\le 3j$. On the other hand, for any such
$(\alpha',\beta',\gamma',\delta')$ we have
\begin{eqnarray}
\label{eqn:lower_order_term_other}
\lefteqn{\int_{\mathbb{R}^{2d}}\tau^{\alpha'}\,\mathbf{u}^{\beta'}\,{\mathbf{w}}^{\gamma'}\,\mathbf{s}^{\delta'}\,
e^{\mathcal{B}_\tau(m,\mathbf{u},\mathbf{w},\mathbf{s}+\mathbf{r})}\,\mathrm{d}\mathbf{s}}\nonumber\\
&=&\tau^{\alpha'}\,\mathbf{u}^{\beta'}\,{\mathbf{w}}^{\gamma'}\,
e^{i\tau\,\omega_m\big(\upsilon_f(m),\mathbf{w}\big)
-\frac 12\,F -i\,\omega_0(\mathbf{r},H)-\frac 12\,\mathbf{r}^t\,Q_A\,\mathbf{r}}\nonumber\\
&&\cdot\int_{\mathbb{R}^{2d}}\,\mathbf{s}^{\delta'}\,e^{-i\,\omega_0(\mathbf{s},H)-\frac 12\,\mathbf{s}^t\,Q_A\,\mathbf{s}}\,\mathrm{d}\mathbf{s}\nonumber\\
&=&\tau^{\alpha'}\,\mathbf{u}^{\beta'}\,{\mathbf{w}}^{\gamma'}\,
e^{i\tau\,\omega_m\big(\upsilon_f(m),\mathbf{w}\big)
-\frac 12\,F -i\,\omega_0(\mathbf{r},H)-\frac 12\,\mathbf{r}^t\,Q_A\,\mathbf{r}}\nonumber\\
&&\cdot P_{\delta'}\left.\left(e^{-\frac 12\,\xi^t\,Q_A^{-1}\,\xi}\right)\right|_{\xi=-J_0H},
\end{eqnarray}
where $P_{\delta'}$ is a suitable differential polynomial, in the collective variables $\xi$,
of degree $|\delta'|$.
Thus (\ref{eqn:lower_order_term_other}) can be decomposed as linear combination of terms of the form
\begin{eqnarray*}
\lefteqn{\tau^{\alpha'}\,\mathbf{u}^{\beta'}\,\mathbf{w}^{\gamma'}\,H^{\delta'}\,
e^{i\tau\,\omega_m\big(\upsilon_f(m),\mathbf{w}\big)-\frac 12\,F -i\,\omega_0(\mathbf{r},H)
-\frac 12\,\mathbf{r}^t\,Q_A\,\mathbf{r}
-\frac 12\,H^t\,Q_A^{-1}\,H}}\\
&\propto&\tau^\alpha\,\mathbf{u}^\beta
\,\mathbf{w}^{\gamma'}\,H^{\delta'}
\cdot e^{i\tau\,\omega_m\big(\upsilon_f(m),\mathbf{w}\big)+
\mathcal{S}_A\big(\mathbf{u},\tau\,\upsilon_f(m)+\mathbf{w}\big)}
\end{eqnarray*}
with $\alpha'+|\beta'|+|\gamma'|+|\delta'|\le 3j$. Since $H$ is linear in $(\tau,\mathbf{u},\mathbf{w})$, 
this completes the proof of Theorem \ref{thm:local_scaling_asymptotics_time}.

\end{proof}

\section{Local scaling asymptotics near fixed loci}
\label{sctn:fixed_loci_local}
We now aim to specialize the previous result to asymptotics near the fixed locus.
To this end, we first need to introduce some more terminology and auxiliary results.

\subsection{Preliminaries}

\subsubsection{Clean and very clean periods}
Let us now make our discussion more precise. To begin with, we clarify the class of periods to
which our results will apply. 

\begin{defn}
 \label{defn:very_clean_periods}
 Let $\tau_0\in \mathbb{R}$. 
 \begin{itemize}
  \item We shall let $M_{\tau_0}=:\mathrm{Fix}\left(\phi^M_{\tau_0}\right)\subseteq M$ be the fixed locus of
  $\phi^M_{\tau_0}:M\rightarrow M$.
  \item We shall say that $\tau_0$ is a \textit{clean period} of the phase flow $\phi^M$ if
  \begin{enumerate}
   \item $M_{\tau_0}$ is a submanifold of $M$;
   \item at each $m\in M_{\tau_0}$, we have 
   $T_mM_{\tau_0}=\ker \left(d_m\phi^M_{\tau_0}-\mathrm{id}_{T_mM_{\tau_0}}\right)$.
  \end{enumerate}
  \item We shall say that $\tau_0$ is a \textit{very clean period}, or a \textit{symplectically clean period}, 
  of $\phi^M$ if
  \begin{enumerate}
   \item $\tau_0$ is a clean period of $\phi^M$;
   \item $M_{\tau_0}\subseteq M$ is a symplectic submanifold.
  \end{enumerate}
   \end{itemize}

\end{defn}

\begin{rem}
\label{rem:tangent_normal_spaces}
 If $\tau_0\in \mathbb{R}$ is a clean period of $\phi^M$, then it is very clean if and only
if, in addition, 
\begin{equation}
 \label{eqn:very_clean_periods}
\ker \left(\mathrm{id}_{T_mM}-d_m\phi^{M}_{\tau_0}\right)\cap 
\mathrm{im} \left(\mathrm{id}_{T_mM}-d_m\phi^{M}_{\tau_0}\right)
=\{\mathbf{0}\}
\end{equation}
for every $m\in M_{\tau_0}$. In fact, the latter space on the left hand side of (\ref{eqn:very_clean_periods})
is the symplectic normal space of $T_mM_{\tau_0}$ (\S 4 of \cite{dg}).
\end{rem}

Our local scaling asymptotics will apply to all
very clean periods. 

\begin{defn}
 \label{defn:tangent_normal_spaces}
Suppose that $P\subseteq M$ is a submanifold, and $T_pP\subseteq T_pM$ is its tangent subspace
at a point $p\in P$. We shall denote by $N_p^gP=\left(T_pP\right)^{\perp_g}\subseteq T_pM$ the Riemannian orthocomplement
of $T_pP$ with respect to the Riemannian metric $g$, and by $N_p^\omega P=\left(T_pP\right)^{\perp_\omega}\subseteq T_pM$ the symplectic
normal space
with respect to the symplectic structure $\omega$. 
\end{defn}

Suppose $\tau_0$ is a very clean period of $\phi^M$ and $F\subseteq M_{\tau_0}$ is a connected component
of the fixed locus. If $m\in F$, then by the above
\begin{equation}
 \label{eqn:normal_space_image}
N_m^\omega F=\mathrm{im} \left(\mathrm{id}_{T_mM}-d_m\phi^{M}_{\tau_0}\right).
\end{equation}

\begin{rem}
\label{rem:riemannian_vs_symplectic_orthocomplement}
Unless $P$ is a complex
submanifold, $N_p^gP\neq N_p^\omega P$ in general. Except when $f$ is compatible, $M_{\tau_0}$
needn't be a complex submanifold.
\end{rem}
\subsubsection{Poincar\'{e} data along clean fixed loci}

Let us specialize the considerations of \ref{subsctn:poincare}  
to the case of a fixed point of $m\in M_{\tau_0}$.
If $m=\pi(x)$ and we fix a HLCS centered at $x$, $d_m\phi^{-\tau_0}:T_mM\rightarrow T_mM$
corresponds to a symplectic matrix $A$.

\begin{defn}
\label{defn:fixed_point_case}
Restricting the quadratic form (\ref{eqn:defn_S_A})
to the diagonal subspace in 
$\mathbb{R}^{2d}\times \mathbb{R}^{2d}$, we obtain a quadratic form
$\mathfrak{Q}_A:\mathbb{R}^{2d}\rightarrow \mathbb{C}$ given by
$\mathfrak{Q}_A(\mathbf{u})=-\mathcal{S}_A(\mathbf{u},\mathbf{u})$.
If $m=m_{\tau_0}$, with the previous choices $\mathfrak{Q}_A$
clearly corresponds to the well-defined 
quadratic form on $T_mM$
\begin{equation}
 \label{eqn:quadratic_form}
\mathfrak{Q}_{\tau_0,m}(\mathbf{v})=:-\mathcal{S}_{\tau_0,m}(\mathbf{v},\mathbf{v})
\end{equation}
(Definition \ref{defn:quadratic_form_along_graph}).

\end{defn}

 \begin{rem}
 \label{rem:positive_definite}
Since $\mathcal{P}_A$ is clearly positive definite, 
$\Re \big(\mathfrak{Q}_{\tau_0,m}(\mathbf{v})\big)\ge 0$ for every $\mathbf{v}\in T_mM$. In addition, if
$\tau_0$ is very clean then by Remark \ref{rem:tangent_normal_spaces}, Definition \ref{defn:tangent_normal_spaces},
(\ref{eqn:defn_S_A}) and (\ref{eqn:normal_space_image})
there exists $C=C_{\tau_0}>0$ such that for any connected component $F$ of $M_{\tau_0}$ we have
\begin{equation}
 \label{eqn:normal_restriction_pos_def}
\Re \big(\mathfrak{Q}_{\tau_0,m}(\mathbf{v})\big)\ge C\,\|\mathbf{v}\|^2\,\,\,\mathrm{if}\,\,\,m\in F
\,\,\,\mathrm{and}\,\,\,\mathbf{v}\in N_m^\omega F;
\end{equation}
here $\|\cdot\|=\|\cdot\|_m$ is the pointwise norm for $g$ at $m\in M$. 
\end{rem} 

\begin{defn}
 \label{defn:normal_quadratic_form}
If $F$ is a connected component of $M_{\tau_0}$, and
$m\in F$, then by Remark \ref{rem:positive_definite} 
the restriction $\mathfrak{Q}_{\tau_0,m}^{\mathrm{nor}}$
of $\mathfrak{Q}_{\tau_0,m}$
to $N^\omega _mF$ has positive definite real part. We shall denote by
$\det\left(\mathfrak{Q}_{\tau_0,m}^{\mathrm{nor}}\right)$ its determinant with respect to any orthonormal
basis of $N^s_mF$ for the restricted Euclidean structure of $T_mM$. 
The square root $\det\left(\mathfrak{Q}_{\tau_0,m}^{\mathrm{nor}}\right)^{1/2}$ is 
defined in a standard manner \cite{hormander_I}.
\end{defn}

\begin{rem}
\label{rem:positive_definite_hamiltonian}
 More generally, in the same situation we can consider the quadratic expression 
$\mathcal{S}_{\tau_0,m}\big(\mathbf{u},\tau\,\upsilon_f(m)+\mathbf{u}\big)$
for $(\mathbf{u},\tau)\in N^\omega_{m}F\times \mathbb{R}$. Since $A-I_{2d}$ restricts to an isomorphism of
$N^\omega_mF$ and $\upsilon_f(m)\in T_mF$, in this case in place of 
(\ref{eqn:normal_restriction_pos_def})
we have 
\begin{eqnarray}
\label{eqn:normal_restriction_pos_def-hamiltonian}
 \lefteqn{\Re\Big(\mathcal{S}_{\tau_0,m}\big(\mathbf{u},\tau\,\upsilon_f(m)+\mathbf{u}\big)\Big)}\nonumber\\
&=&-L_A\big(\mathbf{u},\tau\,\upsilon_f(m)+\mathbf{u}\big)^t\,\mathcal{P}_A\,L_A\big(\mathbf{u},\tau\,\upsilon_f(m)+\mathbf{u}\big)\nonumber\\
&=&-\big[(A-I_{2d})\mathbf{u}-\tau\,\upsilon_f(m)\big]^t\,\mathcal{P}_A\,\big [(A-I_{2d})\mathbf{u}-\tau\,\upsilon_f(m)\big ]\nonumber\\
&\le &- C\,\left(\|\mathbf{u}\|^2+\tau^2\,\|\upsilon_f(m)\|^2\right)
\end{eqnarray}
for some $C>0$ depending only on $\tau_0$. 
\end{rem}

If $m\in M_{\tau_0}$, then $m=m_{\tau_0}$; therefore, if $\pi(x)=m$ then
$\pi(x_{\tau_0})=m_{\tau_0}=\pi(x)$, whence
$x_{\tau_0}\in S^1\cdot x$ (the $S^1$-orbit through $x$).
Thus there exists a unique $h(x)\in S^1$ such that $x_{\tau_0}=r_{h(x)}(x)$. 
If $h(x)=e^{i\vartheta_0}$, we have
$x_{\tau_0}=x+(\vartheta_0,\mathbf{0})$. Thus a HLCS centered at $x$ obviously induces a HLCS
centered at $x_{\tau_0}$, given by
$x_{\tau_0}+(\theta,\mathbf{v})=:x+(\vartheta_0+\theta,\mathbf{v})$.

If $F_1,\ldots,F_{\ell_{\tau_0}}$ are the connected components
of $M_{\tau_0}$, then $h(x)$ is constant over each $F_a$.
Let $h_a\in S^1$ be the constant value of $h(x)$ for $\pi(x)\in F_a$.
Then for any $x\in F_a$, $k\in \mathbb{N}$ and $\tau\in \mathbb{R}$ we have
\begin{eqnarray}
\label{eqn:equivariance_property_tau_a}
U_{\tau,k}\big(x+\mathbf{v},x+\mathbf{v}\big)&=&
U_{\tau,k}\left(x+\mathbf{v},h_a^{-1}\cdot (x_{\tau_0}+\mathbf{v})\right)\nonumber\\
&=&
h_a^k\cdot U_{\tau,k}\left(x+\mathbf{v},x_{\tau_0}+\mathbf{v}\right).
\end{eqnarray}

\subsection{Diagonal rapid decrease away from clean fixed loci}

For $\tau\in \mathbb{R}$, let us set
$X_\tau=:\pi^{-1}(M_\tau)$ (in general, $X_\tau$ \textit{is not} the fixed locus of
$\phi^X_\tau$, but contains it). 
Since $\pi:X\rightarrow M$ is a Riemannian submersion, we have
\begin{equation}
\label{eqn:riem_submersion}
 \mathrm{dist}_X(y,X_{\tau})=\mathrm{dist}_M\big (\pi(y),M_{\tau}\big)
\end{equation}
for any $y\in X$.

\begin{prop}
 \label{prop:rapid_decrease_off_fixed_locus}
Suppose that $\tau_0$ is a clean period of $\phi^M$, and choose 
$C,\,E,\,\,\epsilon>0$.
Uniformly in $y\in X$ and $\tau\in \mathbb{R}$ satisfying
\begin{equation}
 \label{eqn:hypothesis_prop_bound_fixed_locus}
\mathrm{dist}_X\big(y,X_{\tau_0}\big)>C\,k^{-7/18}
\,\,\,\,\,\,\,\,\,\mathit{and}\,\,\,\,\,\,\,\,\,
|\tau|<E\,k^{1/9-\epsilon},
\end{equation}
we have
$$
U_{\tau_0+\tau/\sqrt{k}}(y,y)=O\left(k^{-\infty}\right).
$$
\end{prop}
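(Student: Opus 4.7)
The plan is to deduce this diagonal rapid-decay statement directly from Proposition \ref{prop:rapid_decrease} by specializing $x=y$: that proposition yields $U_{\tau_k}(y,y)=O(k^{-\infty})$ whenever $\mathrm{dist}_X\big(y,S^1\cdot y_{\tau_0}\big)\ge C'\,k^{-7/18}$ and $|\tau|\le E\,k^{1/9-\epsilon}$. Thus the whole task is to show that, under the hypothesis $\mathrm{dist}_X(y,X_{\tau_0})>C\,k^{-7/18}$, such a lower bound on $\mathrm{dist}_X\big(y,S^1\cdot y_{\tau_0}\big)$ is automatic, for some $C'>0$ independent of $k$.

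First I would push everything down to the base $M$. Since $\pi$ is a Riemannian submersion with $S^1$-orbits as fibers, and $\pi(y_{\tau_0})=n_{\tau_0}$ where $n=:\pi(y)$, we have $\mathrm{dist}_X\big(y,S^1\cdot y_{\tau_0}\big)=\mathrm{dist}_M(n,n_{\tau_0})$, while (\ref{eqn:riem_submersion}) gives $\mathrm{dist}_X(y,X_{\tau_0})=\mathrm{dist}_M(n,M_{\tau_0})$. The problem thus reduces to producing constants $c,\delta_0>0$ depending only on $\tau_0$ such that
$$
\mathrm{dist}_M(n,n_{\tau_0})\ge c\cdot \min\!\big(\mathrm{dist}_M(n,M_{\tau_0}),\,\delta_0\big) \qquad \forall\,n\in M.
$$

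The key step, and the one I expect to be the main obstacle, is the local linearization in a tubular neighbourhood of $M_{\tau_0}$. For $m\in M_{\tau_0}$ and $\mathbf{v}\in N_m^gM_{\tau_0}$ small, setting $n=\exp_m(\mathbf{v})$ and using $\phi^M_{\tau_0}(m)=m$, a Taylor expansion in normal coordinates yields
$$
\mathrm{dist}_M(n,n_{\tau_0})=\big\|\big(I_{T_mM}-d_m\phi^M_{\tau_0}\big)\mathbf{v}\big\|+O\big(\|\mathbf{v}\|^2\big).
$$
The clean-period condition (Definition \ref{defn:very_clean_periods}) says precisely that $\ker\big(I_{T_mM}-d_m\phi^M_{\tau_0}\big)=T_mM_{\tau_0}$, so $I_{T_mM}-d_m\phi^M_{\tau_0}$ is injective on $N_m^gM_{\tau_0}$. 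A compactness argument on the unit sphere bundle of $N^gM_{\tau_0}\to M_{\tau_0}$ then provides a uniform lower bound $\|(I_{T_mM}-d_m\phi^M_{\tau_0})\mathbf{v}\|\ge c_1\,\|\mathbf{v}\|$. For $\|\mathbf{v}\|$ smaller than some $\delta_0>0$ the linear term dominates the remainder, giving $\mathrm{dist}_M(n,n_{\tau_0})\ge (c_1/2)\,\mathrm{dist}_M(n,M_{\tau_0})$. On the complementary closed set $\{n:\mathrm{dist}_M(n,M_{\tau_0})\ge \delta_0\}$, compactness of $M$ forces the continuous function $n\mapsto \mathrm{dist}_M(n,n_{\tau_0})$, whose zero set is exactly $M_{\tau_0}$, to be bounded below by some $c_2>0$. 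Combining the two regimes yields the asserted global inequality.

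Finally, I absorb the time rescaling exactly as in the proof of Proposition \ref{prop:rapid_decrease}: the bound (\ref{eqn:bound_distance_tau_k}) gives $\mathrm{dist}_M(n_{\tau_0},n_{\tau_k})=o\big(k^{-7/18}\big)$ uniformly in $n$, so for $k\gg 0$ and any $y$ satisfying (\ref{eqn:hypothesis_prop_bound_fixed_locus}) the triangle inequality yields
$$
\mathrm{dist}_X\big(y,S^1\cdot y_{\tau_k}\big)\ge \mathrm{dist}_M(n,n_{\tau_0})-\mathrm{dist}_M(n_{\tau_0},n_{\tau_k})\ge \tfrac{c_1C}{4}\,k^{-7/18},
$$
and Proposition \ref{prop:rapid_decrease} with $x=y$ delivers the claimed rapid decay.
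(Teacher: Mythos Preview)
Your proof is correct and follows essentially the same route as the paper's. The paper simply asserts the key inequality $\mathrm{dist}_M(n,n_{\tau_0})\ge C_1\,\mathrm{dist}_M(n,M_{\tau_0})$ as a consequence of the clean-period hypothesis, whereas you supply the underlying tubular-neighbourhood and compactness argument explicitly; after that, both proofs push to $M$ via the Riemannian submersion, absorb the time shift using (\ref{eqn:bound_distance_tau_k}), and finish with the triangle inequality. One small redundancy: once you have $\mathrm{dist}_X(y,S^1\cdot y_{\tau_0})\ge C'\,k^{-7/18}$ you may invoke Proposition~\ref{prop:rapid_decrease} immediately with $x=y$; the further passage to $y_{\tau_k}$ in your last paragraph is not needed (Proposition~\ref{prop:rapid_decrease} already handles that step internally).
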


\begin{proof}
Let us set as before $\tau_k=\tau_0+\tau/\sqrt{k}$.
Adapting the argument of Proposition \ref{prop:rapid_decrease},
we are now reduced to proving that, under the previous assuptions, 
for some $D>0$ we have
\begin{equation}
 \label{eqn:lower_bound_distance_between_orbits}
\mathrm{dist}_X\left(y,S^1\cdot y_{\tau_k}\right)\ge D\,k^{-7/18}.
\end{equation}

To this end, write $n=\pi(y)$ and remark that 
$\forall \,\tau'\in \mathbb{R}$
\begin{equation}
\label{eqn:riem_submersion}
 \mathrm{dist}_X(y,X_{\tau'})=\mathrm{dist}_M(n,M_{\tau'}),\,\,\,\,\,
\mathrm{dist}_X\left(y,S^1\cdot y_{\tau'}\right)=
\mathrm{dist}_M\left(n,n_{\tau'}\right);
\end{equation}
on the other hand,
using first that $\tau_0$ is a clean period and then 
(\ref{eqn:hypothesis_prop_bound_fixed_locus}),  
we have
\begin{equation}
 \label{eqn:normal_bound_distance}
\mathrm{dist}_M(n,n_{\tau_0})\ge C_1\,\mathrm{dist}_M(n,M_{\tau_0})\ge 
(C\,C_1)\,k^{-7/18}.
\end{equation}

Putting together (\ref{eqn:riem_submersion}), (\ref{eqn:normal_bound_distance})
and (\ref{eqn:bound_distance_tau_k}),
we get if
$\mathrm{dist}_X\left(y,X_{\tau_0}\right)\ge C\,k^{-7/18}$:
\begin{eqnarray*}
\mathrm{dist}_X\left(y,S^1\cdot y_{\tau_k}\right)&=&\mathrm{dist}_M\big(n,n_{\tau_k})\nonumber\\
&\ge&\mathrm{dist}_M\big(n,n_{\tau_0}\big)-\mathrm{dist}_M\big(n_{\tau_k},n_{\tau_0}\big)
\ge\frac 12\,CC_1\,k^{-7/18}
\end{eqnarray*}
if $k\gg 0$.
This establishes (\ref{eqn:lower_bound_distance_between_orbits}), hence proves 
the Proposition.
\end{proof}

\subsection{The asymptotic expansion near a fixed locus}

If we specialize Theorem \ref{thm:local_scaling_asymptotics_time} to
the asymptotics near a very clean fixed locus, in view of 
(\ref{eqn:equivariance_property_tau_a}) we immediately obtain:

\begin{cor}
 \label{cor:local_scaling_asymptotics_time_fixed_point_case}
Suppose that $\tau_0$ is a very clean period of $\phi^M$
and $E>0$. Let $F\subseteq M_{\tau_0}$ be a connected
component of the fixed locus.
Then, uniformly in $x\in \pi^{-1}(F)\subseteq X_{\tau_0}$,
in the choice of a HLCS centered at $x$,
and in
$(\mathbf{u},\tau)\in N^\omega_{\pi(x)}F\times \mathbb{R}$ with
$\|\mathbf{u}\|, \,|\tau|
\le E\,k^{1/9}$, 
the following asymptotic
expansion holds as $k\rightarrow+\infty$:
\begin{eqnarray*}
\lefteqn{U_{\tau_0+\frac{\tau}{\sqrt{k}}}\left(x+\dfrac{\mathbf{u}}{\sqrt{k}},
x+\dfrac{\mathbf{u}}{\sqrt{k}}\right)
\sim \left(\frac k\pi\right)^d\,\varrho_{\tau_0}(m)\dfrac{2^d}{\nu(\tau_0,m)}\, e^{i\tau\,\sqrt{k}\,f(m)}}
\\
&&\,\,\,\,\,\,\,\,\,\,\,\,\,\,																															
\cdot h_a^k\,e^{i\tau\,\omega_m\big(\upsilon_f(m),\mathbf{u}\big)+
\mathcal{S}_{\tau_0,m}\big(\mathbf{u},\tau\,\upsilon_f(m)+\mathbf{u}\big)}
\,\left[1+\sum_{j\ge 1}k^{-j/2}\,b_j(m,\tau,\mathbf{u})\right],
\end{eqnarray*}
where $m=\pi(x)$, and $b_j$ is $\mathcal{C}^\infty$ and 
a polynomial in $(\tau,\mathbf{u})$ of joint degree $\mathrm{deg}(b_j)\le 3j$.
\end{cor}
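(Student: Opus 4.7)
The plan is to deduce the Corollary directly from Theorem \ref{thm:local_scaling_asymptotics_time} by specializing the displacement vectors and invoking the $S^1$-equivariance identity (\ref{eqn:equivariance_property_tau_a}). Since $m=\pi(x)\in F\subseteq M_{\tau_0}$, we have $m_{\tau_0}=m$, hence $T_mM=T_{m_{\tau_0}}M$ canonically. Moreover $x_{\tau_0}$ lies in the $S^1$-orbit through $x$ and equals $r_{h_a}(x)=x+(\vartheta_0,\mathbf{0})$, where $h_a=e^{i\vartheta_0}$ is the constant value of $h(\cdot)$ on the component $F$. Consequently, a HLCS centered at $x$ induces in a natural way a HLCS centered at $x_{\tau_0}$ (by translation in the angular coordinate by $\vartheta_0$), and the two HLCS's induce the same unitary identification $T_mM\cong \mathbb{C}^d$.

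With these identifications, Theorem \ref{thm:local_scaling_asymptotics_time} applied with $\mathbf{w}=\mathbf{u}$ yields an asymptotic expansion for $U_{\tau_0+\tau/\sqrt{k}}\!\left(x+\mathbf{u}/\sqrt{k},\,x_{\tau_0}+\mathbf{u}/\sqrt{k}\right)$ whose leading coefficient is $\varrho_{\tau_0}(m)\,(k/\pi)^d\,2^d/\nu(\tau_0,m)$ and whose exponential factor is
\[
e^{i\tau\sqrt{k}\,f(m)}\cdot e^{\mathcal{S}_{\tau_0,m}\big(\mathbf{u},\,\tau\,\upsilon_f(m)+\mathbf{u}\big)+i\tau\,\omega_m\big(\upsilon_f(m),\mathbf{u}\big)},
\]
with lower-order polynomial coefficients $b_j(m,\tau,\mathbf{u},\mathbf{u})$ of joint degree $\le 3j$. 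Since the restriction of a polynomial of degree $\le 3j$ in $(\tau,\mathbf{u},\mathbf{w})$ to the diagonal $\mathbf{w}=\mathbf{u}$ is still a polynomial in $(\tau,\mathbf{u})$ of degree $\le 3j$, the lower-order terms retain the required structure after defining $b_j(m,\tau,\mathbf{u}):=b_j(m,\tau,\mathbf{u},\mathbf{u})$.

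Next I would apply the equivariance relation
\[
U_{\tau_k,k}\big(x+\mathbf{v},x+\mathbf{v}\big)=h_a^k\cdot U_{\tau_k,k}\big(x+\mathbf{v},x_{\tau_0}+\mathbf{v}\big),
\]
stated in (\ref{eqn:equivariance_property_tau_a}), with $\mathbf{v}=\mathbf{u}/\sqrt{k}$. Multiplication by $h_a^k$ converts the off-diagonal expansion above into the desired on-diagonal expansion and produces the factor $h_a^k$ appearing in the statement. The hypothesis $\mathbf{u}\in N^\omega_{\pi(x)}F$ is not used for the purely algebraic derivation of the expansion (Theorem \ref{thm:local_scaling_asymptotics_time} allows arbitrary tangent vectors); it is recorded here because of its role in subsequent applications: via Remark \ref{rem:positive_definite_hamiltonian}, the exponent $\mathcal{S}_{\tau_0,m}(\mathbf{u},\tau\,\upsilon_f(m)+\mathbf{u})$ then has real part $\le -C(\|\mathbf{u}\|^2+\tau^2\|\upsilon_f(m)\|^2)$, ensuring the requisite Gaussian decay used later in the trace integrations.

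There is essentially no significant obstacle: the real content is already contained in Theorem \ref{thm:local_scaling_asymptotics_time}, and only two bookkeeping points must be handled carefully --- first, the compatibility of the induced HLCS at $x_{\tau_0}$ with the identification $T_mM=T_{m_{\tau_0}}M$ so that the same symbol $\mathbf{u}$ may legitimately occupy both slots, and second, the fact that the shift $x_{\tau_0}\mapsto x$ on the second argument contributes precisely $h_a^k$ because $U_{\tau,k}$ has equivariance weight $k$ in its second argument (as is manifest from (\ref{eqn:equivariant_k_U}), since each $s_j^{(k)}$ lies in $H(X)_k$).
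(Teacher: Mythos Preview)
Your proof is correct and follows exactly the approach the paper takes: the paper simply says the Corollary follows by specializing Theorem \ref{thm:local_scaling_asymptotics_time} to $\mathbf{w}=\mathbf{u}$ and invoking the equivariance relation (\ref{eqn:equivariance_property_tau_a}). Your write-up is in fact more explicit than the paper's, which dispatches the matter in one sentence.
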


We can further specialize this at a fixed time $\tau_0$, that is with $\tau=0$,
and obtain
the following, which is also a consequence of
Theorem 1.4 of \cite{paoletti_intjm}:

\begin{cor}
 \label{cor:local_fixed_time_expansion}
Under the assumptions of Corollary \ref{cor:local_scaling_asymptotics_time_fixed_point_case},   
as $k\rightarrow +\infty$ the following asymptotic expansion holds
 uniformly for $m\in F_a\subseteq M_{\tau_0}$, $x\in \pi^{-1}(x)$ and
$\|\mathbf{v}\|\le C\,k^{1/9}$ with $\mathbf{v}\in N^\omega_{m}F$:
\begin{eqnarray*}
U_{\tau_0,k}\left(x+\dfrac{\mathbf{v}}{\sqrt{k}},x+\dfrac{\mathbf{v}}{\sqrt{k}}\right)
&\sim& \left(\frac k\pi\right)^d
\,\varrho_{\tau_0}(m)\dfrac{2^d}{\nu(\tau_0,m)}\\
&&\cdot h_a^k\, e^{-\mathfrak{Q}_{A}(\mathbf{v})}\,\left[1+\sum_{j\ge 1}k^{-j/2}\,a_j(m,
\tau_0,\mathbf{v})\right],
\nonumber
\end{eqnarray*}
where $a_j$ is a polynomial in $\mathbf{v}$ of parity $j$, and degree $\le 3j$.
\end{cor}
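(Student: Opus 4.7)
The plan is to specialize Corollary \ref{cor:local_scaling_asymptotics_time_fixed_point_case} to $\tau = 0$, relabelling the displacement variable from $\mathbf{u}$ to $\mathbf{v}$. With $\tau = 0$ the phase factor $e^{i\tau\sqrt{k}\,f(m)}$ and the shift $e^{i\tau\,\omega_m(\upsilon_f(m), \mathbf{u})}$ both reduce to $1$; the exponent $\mathcal{S}_{\tau_0, m}\bigl(\mathbf{u}, \tau\,\upsilon_f(m) + \mathbf{u}\bigr)$ becomes $\mathcal{S}_{\tau_0, m}(\mathbf{v}, \mathbf{v})$, which equals $-\mathfrak{Q}_{\tau_0, m}(\mathbf{v}) = -\mathfrak{Q}_A(\mathbf{v})$ by Definition \ref{defn:fixed_point_case}. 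Thus the leading term and exponential weight of the claimed expansion match verbatim, and the subleading coefficients $a_j(m, \tau_0, \mathbf{v})$ are given by $b_j(m, 0, \mathbf{v})$, which are polynomials in $\mathbf{v}$ of degree at most $3j$.

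The only non-routine point is the \emph{parity} assertion, namely that $a_j$ is an even or an odd polynomial in $\mathbf{v}$ according as $j$ is even or odd. This structural refinement is not visible from the statement of Corollary \ref{cor:local_scaling_asymptotics_time_fixed_point_case}, but it can be extracted from the proof of Theorem \ref{thm:local_scaling_asymptotics_time} by tracking parities through the two Taylor expansions performed there, specialized to $\tau = 0$ and $\mathbf{w} = \mathbf{u} = \mathbf{v}$.

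More precisely, each factor of $k^{-1/2}$ in the expansion (\ref{eqn:rewrite_A_tau}) of the amplitude $\mathcal{A}_\tau$ is produced either from Taylor expansion of the semiclassical symbols in (\ref{eqn:asymptotic_exp_phase_szego}) and (\ref{eqn:asymptotic_exp_phase_toeplitz}) or from expansion of $e^{ikR_3}$ with $R_3$ as in (\ref{eqn:3rd_order_remainder}); in either case each $k^{-1/2}$ comes paired with exactly one additional scaled displacement variable, so $\mathcal{Q}_a$ has parity $a$ in the collective variables $(\mathbf{v}, \mathbf{u}, \mathbf{w}, \tau, \vartheta, \theta)$. The stationary phase contribution $\mathfrak{R}^b$ is a constant-coefficient operator of even order $2b$, and hence preserves parity after evaluation at the critical point, whose angular coordinates vanish when $\tau = 0$. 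Therefore the coefficient $\mathcal{I}_j$ in (\ref{eqn:asymptotic_expansion_I_tau}) has parity $j$ in the displacement variables at $\tau = 0$. Finally, the change of variables $\mathbf{v} = \mathbf{v}' + \mathbf{u}$ and Gaussian integration against a weight even in $\mathbf{v}'$ (as in (\ref{eqn:leading_order_term_other})--(\ref{eqn:lower_order_term_other})) preserves parity in $\mathbf{u}$; setting $\mathbf{w} = \mathbf{u} = \mathbf{v}$ then yields a polynomial in $\mathbf{v}$ of parity $j$.

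The main obstacle is precisely this parity bookkeeping, which must be kept consistent through the two Taylor expansions, the stationary phase formula, and the subsequent Gaussian integration in $\mathbf{v}$; the remaining identifications are a direct specialization of Theorem \ref{thm:local_scaling_asymptotics_time}.
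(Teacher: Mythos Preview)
Your specialization of Corollary \ref{cor:local_scaling_asymptotics_time_fixed_point_case} to $\tau=0$ is exactly what the paper does, and the identification of the leading term and the degree bound on $a_j$ is correct. The only point where you diverge from the paper is the parity assertion: the paper does not argue this at all, but simply remarks that it is already contained in Theorem 1.4 of \cite{paoletti_intjm}. You instead sketch a self-contained derivation by tracking parities through the proof of Theorem \ref{thm:local_scaling_asymptotics_time}.

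Your parity argument is essentially sound. A couple of places could be tightened: the claim that $\mathcal{Q}_a$ has parity $a$ in the collective variables follows from the fact that every source of a factor $k^{-1/2}$ (the Taylor expansion of $e^{ikR_3}$, the Taylor expansions of the semiclassical symbols $s_j,a_{\tau' j}$ and of the cut-offs, and the descent in the symbol order) changes the total degree by one modulo $2$; and for the stationary phase step, it helps to note explicitly that each monomial in $\mathfrak{R}$ carries exactly one angular derivative $\partial_\theta$ or $\partial_\vartheta$, while $\Theta|_{\tau=0}$ is even in $(\theta,\vartheta,\mathbf{u},\mathbf{v},\mathbf{w})$, so that $\mathfrak{R}^b$ shifts parity by exactly $b$ before evaluation at the critical point $(1,0,1,0)$. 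The final Gaussian step is cleanest via the substitution $(\mathbf{u},\mathbf{w},\mathbf{v})\mapsto(-\mathbf{u},-\mathbf{w},-\mathbf{v})$ in the integral, under which $\mathcal{B}_0$ is invariant and $\mathcal{I}_j$ picks up $(-1)^j$. With these clarifications your route is a legitimate and more self-contained alternative to the paper's citation.
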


The statement about the parity is part of Theorem 1.4 of \cite{paoletti_intjm}.

\begin{rem}
 \label{rem_:exponential_decay}
In view of Remarks \ref{rem:positive_definite_hamiltonian} 
and \ref{rem:positive_definite},
the previous Corollaries describe an exponential decay along transverse directions to the fixed
locus.
\end{rem}

\part{Trace scaling asymptotics}

We shall now apply the local diagonal scaling asymptotics near fixed loci from
\S \ref{sctn:fixed_loci_local} to study global scaling asymptotics
for the trace of $U_{\tau,k}$ near certain well-behaved periods; now the scaling will
be solely with respect to the time variable, as it approaches a
fixed period $\tau_0$
at a controlled pace.
We shall first dwell however on the asymptotics as $k\rightarrow +\infty$ of the trace
of $U_{\tau_0,k}$.
Prior to that, we need to introduce some preliminaries on the natural volume forms and measures
on certain submanifolds of $M$.

\section{Trace asymptotics at a fixed time}

\subsection{Symplectic vs Riemannian volume forms}
\label{subsctn:volume_forms}

\subsubsection{Vector spaces}
Let $(V,J_V,\omega_V)$ be an Hermitian vector space of finite
complex dimension $d\ge 1$ (thus $J_V$ is a linear complex structure on
$V$, and there is a positive definite Hermitian product $h_V:V\times V\rightarrow \mathbb{C}$, 
for which $\omega_V=-\Im (h_V)$). Then $g_V=:\Re (h_V):V\times V\rightarrow \mathbb{R}$ is an Euclidean
product on $V$, viewed as a real vector space. 

Let $W\subseteq V$ be any oriented $r$-dimensional real vector
subspace and let
$g_W:W\times W\rightarrow \mathbb{R}$ be the Euclidean product
on $W$ induced by restriction of $g$; 
then there is an induced \textit{Euclidean volume form} 
$E_W:\bigwedge^rW\rightarrow \mathbb{R}$,
characterized by $E_W\big(w_1,\ldots,w_r)=1$ 
on any oriented orthonormal basis of $(W,g_W)$. If $(w_j)$ is any
oriented basis of $W$, then 
$E_W\big(w_1,\ldots,w_r)=\sqrt{\det \big(h(w_l,w_k)\big)}$.

Suppose that $r=2\,\ell$, and $W$ is a \textit{symplectic} vector subspace
of $(V,\omega_V)$, with restricted symplectic
form $\omega_W:W\times W\rightarrow \mathbb{R}$. Then there is
also a \textit{symplectic volume form} 
$S_W=(1/\ell!)\,\omega_W^{\wedge \ell}:
\bigwedge^rW\rightarrow \mathbb{R}$, which is characterized by
$S_W(w_1,\ldots,w_r)=1$ if $(w_j)$ is any \textit{Darboux} basis of $(W,\omega_W)$.
We shall implicitly consider any symplectic vector
space as oriented by its symplectic volume form (equivalently, by the choice of the orientation
class of any Darboux basis).

Thus on a symplectic vector subspace $W$ of $(V,J_V,\omega_V)$ we have two
naturally induced volume forms $E_W$ and $S_W$.

In particular, any \textit{complex} $\ell$-dimensional
vector subspace $W\subseteq V$ 
is also Hermitian for the metric $h_W=g_W-i\,\omega_W$ given by restriction
of $h$.
It is therefore also a symplectic vector subspace; if $(e_1,\ldots,e_l)$ is an orthonormal basis of
$(W,h_W)$ over $\mathbb{C}$, then $\big(e_1,J(e_1),\ldots,e_l,J(e_l)\big)$ is a Darboux basis
of $(W,\omega_W)$ over $\mathbb{R}$, and an oriented orthonormal basis of $(W,g_W)$.
Thus $E_W=S_W$, since they both equal one on the latter basis.

For a general symplectic vector subspace $W\subseteq V$, 
\begin{equation}
\label{eqn:comparison_volume_form_vector_space}
 S_W=\zeta(W)\,E_W
\end{equation}
for a unique $\zeta(W)>0$. Clearly, $\zeta(W)=\det \big(h(w_l,w_k)\big)^{-1/2}$ if
$(w_j)$ is an arbitrary Darboux basis of $W$.

Similarly, if  $W^s\subseteq V$ is the symplectic orthocomplement of $W$,
then $S_{W^s}=\zeta(W^s)\,E_{W^s}$.

\subsubsection{Manifolds}
Globally, on a complex $d$-dimensional K\"{a}hler manifold $(R,J_R,\omega_R)$
the symplectic volume form $\mathcal{S}_R=\omega_R^{\wedge d}/d!$ equals the
Riemannian volume form $\mathcal{E}_R$
of the oriented Riemannian manifold $(R,g_R)$, where $g_R(\cdot,\cdot)=\omega_R\big(\cdot,J_R(\cdot)\big)$; 
in this case, we shall write $\mu_R=\mathcal{S}_R=\mathcal{E}_R$, and denote the corresponding
measure by $\mathrm{dV}_R$. Obviously, 
the same applies to any complex submanifold of $R$.
  
Consider a general symplectic submanifold $\iota: Q\hookrightarrow R$,
of (real) dimension $2\,\ell$
and codimension $2\,c=2\,(d-\ell)$.
For $q\in Q$, let $T_qQ\subseteq T_qR$ be the tangent space of $Q$ at $q$, 
and $N_q^\omega Q\subseteq T_qR$
the symplectic orthocomplement of $T_qQ$.
Thus we have a symplectic direct sum $T_qR=T_qQ\oplus N_q^\omega Q$. 

Let ${\omega_Q}_q$ and
${\eta_Q}_q$ be the induced symplectic structures on $T_qQ$ and
$N_q^\omega Q$, respectively. 
This defines symplectic structures $\omega_Q$ and
$\eta_Q$ on the vector bundles $TQ$ and $N^\omega Q$ on $Q$; clearly, $\omega_Q=\iota^*(\omega_R)$.
Correspondingly, on $TQ$ and $N^\omega Q$
there are fiberwise symplectic volume forms
$\mathcal{S}_Q=:(1/\ell!)\,\omega_Q^{\wedge \ell}$ and $\mathcal{S}_{N^\omega Q}=:(1/c!)\,\eta_Q^{\wedge c}$, respectively.

Omitting symbols of pull-back for notational simplicity,
along $Q$ we have
$$
{\mu_R}=
\dfrac{1}{d!}\,{\omega_R}^d=\left(\dfrac{1}{\ell!}\,{\omega_Q}^{\wedge \ell}\right)\wedge 
\left(\dfrac{1}{c!}\,{\eta_Q}^{\wedge c}\right)=
{\mathcal{S}_Q}\wedge \mathcal{S}_{N^\omega Q}
$$
on $\bigwedge ^{2d}TR\cong \big[\bigwedge ^{2\ell}TQ\big]\wedge \big[\bigwedge ^{2c}N^\omega Q\big]$.

The oriented vector bundles $TQ$ and $N^\omega Q$ also have 
metric structures, induced by the Riemannian metric of $M$. Therefore,
they have Euclidean (Riemannian) volume forms $\mathcal{E}_Q$ and $\mathcal{E}_{N^\omega Q}$, respectively, defined
pointwise as above.

If $F$ is a complex submanifold, 
then $\mathcal{E}_Q= \mathcal{S}_Q$ and $\mathcal{E}_{N^\omega Q}= \mathcal{S}_{N^\omega Q}$.
For a general symplectic submanifold,
as a global version of (\ref{eqn:comparison_volume_form_vector_space}) we now have
\begin{equation}
 \label{eqn:comparison_volume_form_submanifold}
\mathcal{S}_Q=\zeta _Q\,\mathcal{E}_Q\,\,\,\,\,\,\,\mathrm{and}\,\,\,\,\,\,\,
\mathcal{S}_{N^\omega Q}=\zeta _{N^\omega Q}\cdot\mathcal{E}_{N^\omega Q},
\end{equation}
for unique $\mathcal{C}^\infty$ functions $\zeta_Q,\,\zeta_{N^\omega Q}:Q\rightarrow (0,+\infty)$.
Therefore along the symplectic submanifold $Q\subseteq R$
\begin{equation}
 \label{eqn:defn_zeta_F}
\mu_M=\mathcal{S}_Q\wedge\mathcal{S}_R=
\zeta_N\cdot\mathcal{S}_Q\wedge \mathcal{E}_{N^\omega Q}=\big(\zeta_Q\,\zeta_{N^\omega Q}\big)
\cdot\mathcal{E}_Q\wedge \mathcal{E}_{N^\omega Q}.
\end{equation}

Suppose $q\in Q$, and let $U\subseteq M$ be a sufficiently small open neighborhood
of $q$, so that we can find
local coordinate systems $\beta_m(\mathbf{u})=m+\mathbf{u}$, 
centered at
$m\in U$ and smoothly varying with $m$. 
Using Taylor expansion in $\mathbf{u}$, in additive notation we obtain for $q'\in Q\cap U$:
\begin{eqnarray}
 \label{eqn:defn_zeta_F_rescaled}
\mu_M\left(q'+\frac{\mathbf{u}}{\sqrt{k}}\right)&\sim&\zeta_{N^\omega Q}(q')\,
\left[1+\sum_{j\ge 1}k^{-j/2}\,A_j(q',\mathbf{u})\right]\cdot{S_Q}_{q'}\wedge E_{N_{q'}^\omega Q}\\
&=&\zeta_Q(q')\,\zeta_{N^\omega Q}(q')\cdot \left[1+\sum_{j\ge 1}k^{-j/2}\,B_j(q',\mathbf{u})\right]
\cdot{E_Q}_{q'}\wedge E_{N_{q'}^\omega Q}.
\nonumber
\end{eqnarray}
Here $A_j$ and $B_j$ are $\mathcal{C}^\infty$ in $q'\in U\cap Q$ and homogeneous polynomials 
of degree $j$ in $\mathbf{u}$.

\subsection{Trace asymptotic expansion at a fixed time}

The fixed-time asymptotic expansion for the trace that we shall
discuss presently is really a consequence of the local scaling asymptotics
 at fixed time in \cite{paoletti_intjm},
and in fact it was briefly touched upon in the introduction of that paper for the special
case of isolated fixed points; this case was also treated in
\cite{ch} by a different approach. We take up the issue here again 
in more detail and generality than in \cite{paoletti_intjm},
both to put things in 
perspective towards the trace scaling asymptotics to follow,
and for its obvious intrinsic interest. In the holomorphic case, the global benchmark is of
 course the classical Atyah-Singer Lefschetz fixed point formula; asymptotic
approaches based on 
local scaling expansions, encompassing compositions with Toeplitz operators 
and isotypic decompositions under Lie group actions,
were given in \cite{mz} and \cite{pao-trace}.

\begin{thm}
 \label{thm:trace_asymptotics_tau_0}
Let $\tau_0$ be a very clean period of $\phi^M$, and let $F_1, \dots,F_{\ell_{\tau_0}}$
be the connected components of $M_{\tau_0}\subseteq M$.
For $a=1,\ldots,\ell_{\tau_0}$,
let $2\,d_a$ be the real dimension of the symplectic submanifold $F_a$.
Then 
$$
\mathrm{trace}(U_{\tau_0,k})=\sum_{a=1}^{\ell_{\tau_0}}\mathcal{F}_a(\tau_0, k),
$$
where each summand $\mathcal{F}_a(\tau_0, k)$ is given by an asymptotic expansion
of the form
\begin{eqnarray}
 \label{eqn:trace_asymptotics_tau_0_ath_summand}
\mathcal{F}_a(\tau_0,k)&\sim&h_a^k\,\left(\frac k\pi\right)^{d_a}\,
\sum_{j\ge 0}k^{-j}\,F_{aj}(\tau_0),
\end{eqnarray}
with leading coefficient  
$$
F_{a0}(\tau_0)=2^d\,\int_{M_{\tau_0}}\dfrac{\varrho_{\tau_0}(p)}{\nu(\tau_0,p)}\,\zeta_{N_a}(p)\,
\det\left(\mathfrak{Q}_p^{\mathrm{nor}}\right)^{-1/2}\,\mathrm{dV}_F^s(p),
$$
where the latter factor in the integrand is as in Definition \ref{defn:normal_quadratic_form},
$$
\zeta_{N_a}=:\zeta_{N^\omega F_a}:F_a\longrightarrow (0,+\infty)
$$ 
is as in (\ref{eqn:defn_zeta_F}), and
we have denoted by 
$\mathrm{dV}_F^s=|\mathcal{S}_F|$ the density (or its measure)
associated to the symplectic volume form. 
\end{thm}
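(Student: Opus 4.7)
The plan is to compute the trace from its integral representation
$$\mathrm{trace}(U_{\tau_0,k}) = \int_X U_{\tau_0,k}(x,x)\,\mathrm{dV}_X(x),$$
reducing via localization and rescaling to a sum of Gaussian integrals over the symplectic normal bundles $N^\omega F_a$. First I would invoke Proposition \ref{prop:rapid_decrease_off_fixed_locus} with $\tau=0$ to cut the integration down to a shrinking tubular neighborhood $\{x:\mathrm{dist}_X(x,X_{\tau_0})<Ck^{-7/18}\}$, at the cost of an $O(k^{-\infty})$ error. For $k\gg 0$ this neighborhood splits disjointly into tubular neighborhoods of the $\pi^{-1}(F_a)$, so the trace splits accordingly as $\sum_a \mathcal{F}_a(\tau_0,k)+O(k^{-\infty})$.

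Since $U_{\tau_0,k}(x,x)$ is $S^1$-invariant (by the $S^1$-equivariance of $U_{\tau_0}$ on the $k$-th isotype), the integral over a neighborhood of $\pi^{-1}(F_a)$ descends to an integral over $M$. I would parametrize a tubular neighborhood of $F_a$ in $M$ via the symplectic normal bundle, writing points as $q+\mathbf{v}$ with $q\in F_a$ and $\mathbf{v}\in N_q^\omega F_a$ small, in families of preferred coordinate systems compatible with the HLCS setup. Rescaling $\mathbf{v}\to\mathbf{v}/\sqrt{k}$ produces a Jacobian $k^{-c_a}$ (with $c_a=d-d_a$ the codimension) and stretches the integration range to $\|\mathbf{v}\|\le Ck^{1/9}$, exactly the regime where Corollary \ref{cor:local_fixed_time_expansion} applies.

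In this regime I would substitute the diagonal asymptotic expansion from Corollary \ref{cor:local_fixed_time_expansion} together with the Taylor expansion (\ref{eqn:defn_zeta_F_rescaled}) of $\mu_M(q+\mathbf{v}/\sqrt{k})$. The $\zeta_{F_a}$-factor there combines with the Euclidean measure on $F_a$ to produce precisely the symplectic density $\mathrm{dV}_F^s$, while the $\zeta_{N_a}$-factor persists as an extra weight. The leading-order integral in $\mathbf{v}\in N_q^\omega F_a$ is a Gaussian governed by the quadratic form $\mathfrak{Q}_{\tau_0,q}^{\mathrm{nor}}$, whose real part is positive definite on the symplectic normal directions by Remark \ref{rem:positive_definite}; the evaluation gives $\pi^{c_a}\det(\mathfrak{Q}_q^{\mathrm{nor}})^{-1/2}$. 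Multiplying the prefactors $(k/\pi)^d\cdot k^{-c_a}\cdot \pi^{c_a}=(k/\pi)^{d_a}$, and incorporating $h_a^k$, $\varrho_{\tau_0}(q)$, $2^d/\nu(\tau_0,q)$, and $\zeta_{N_a}(q)$, yields exactly the claimed $F_{a0}(\tau_0)$.

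For the full expansion, I would multiply the kernel and volume-form expansions, obtain a formal series in $k^{-1/2}$, and integrate term by term against the Gaussian. Half-integer powers cancel by parity: the $j$-th kernel coefficient $a_j$ has parity $j$ in $\mathbf{v}$, each $A_{j'}$ in (\ref{eqn:defn_zeta_F_rescaled}) is homogeneous of degree $j'$, $e^{-\mathfrak{Q}_{\tau_0,q}(\mathbf{v})}$ is even in $\mathbf{v}$, and the integration region is symmetric, so all contributions with $j+j'$ odd vanish—yielding an expansion in integer powers of $k^{-1}$ as required. The main delicate points are the careful bookkeeping of the Riemannian--vs--symplectic volume factors $\zeta_{F_a},\zeta_{N_a}$ from \S \ref{subsctn:volume_forms}, verifying that the tubular neighborhood parametrization really matches the HLCS expansion, and controlling the remainder in the rescaled expansion uniformly on $\|\mathbf{v}\|\lesssim k^{1/9}$ so as to justify termwise integration.
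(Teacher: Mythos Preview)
Your proposal is correct and follows essentially the same route as the paper's proof: localize the trace integral to shrinking tubular neighborhoods of the $F_a$ via the off-locus rapid decay, descend to $M$ by $S^1$-invariance, parametrize by the symplectic normal bundle, rescale $\mathbf{v}\mapsto\mathbf{v}/\sqrt{k}$, insert Corollary~\ref{cor:local_fixed_time_expansion} and the volume-form expansion~(\ref{eqn:defn_zeta_F_rescaled}), and evaluate the resulting Gaussian in the normal directions; the parity argument for eliminating half-integer powers is also the same. The only cosmetic difference is that the paper works directly with the first line of~(\ref{eqn:defn_zeta_F_rescaled}) (keeping $\mathcal{S}_{F_a}$ and only the factor $\zeta_{N_a}$), whereas you pass through the Euclidean density and then recombine $\zeta_{F_a}\,\mathcal{E}_{F_a}=\mathcal{S}_{F_a}$, and the paper also Taylor-expands the partition-of-unity bump functions, which you leave implicit but which contribute homogeneously and so do not disturb the parity count.
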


\begin{rem}
 In fact, 
$\mathcal{F}_a(\tau_0,k)$ is supported near $F_a$, 
in the sense that it vanishes (up to negligible contributions)
if $R_{\tau_0}$ is smoothing in a neighborhood $U$ of $F_a$ .
\end{rem}

\begin{proof}

The function $x\mapsto U_{\tau',k}(x,x)$ descends for any $\tau'\in \mathbb{R}$  
to a well-defined $\mathcal{C}^\infty$ function on $M$,
call it $\mathbf{U}_{\tau',k}$. Thus $\mathbf{U}_{\tau',k}(m)=U_{\tau',k}(x,x)$ if $m=\pi(x)$. Furthermore, 
\begin{equation}
\label{eqn:trace_evaluation_integral}
 \mathrm{trace}(U_{\tau',k})=\int _X\,U_{\tau',k}(x,x)\,\mathrm{dV}_X(x)
=\int _M\,\mathbf{U}_{\tau',k}(m)\,\mathrm{dV}_M(m).
\end{equation}

Now let us set $\tau'=\tau_0$ in (\ref{eqn:trace_evaluation_integral}).
By Theorem \ref{thm:local_scaling_asymptotics_time}, up to a negligible
contribution the integrand in (\ref{eqn:trace_evaluation_integral}) asymptotically
localizes in a shrinking neighborhood of $M_{\tau_0}$. Therefore, for $\tau'=\tau_0$
we can rewrite 
(\ref{eqn:trace_evaluation_integral}) asymptotically as
\begin{equation}
\label{eqn:trace_evaluation_integral_localized}
 \mathrm{trace}\big(U_{\tau_0,k}\big)\sim \sum_{a=1}^{\ell_{\tau_0}}\int_{M'_a}\beta_a(m)\,\mathbf{U}_{\tau_0,k}(m)\,\mathrm{d}V_M(m),
\end{equation}
where $M'_a\subseteq M$ is a tubular neighborhood of $F_a$, and $\beta_a\in \mathcal{C}^\infty_0(M_a')$
is identically $=1$ in a smaller tubular neighborhood $M_a''\Subset M_a'$ of $F_a$. 
We aim to estimate asymptotically the $a$-th summand in (\ref{eqn:trace_evaluation_integral_localized}).

For any $p_0\in F_a$, we can find an open neighborhood $U\subseteq F_a$ of $p_0$, and 
a smoothly varying family of Heisenberg local coordinates centered at points 
in $U'=\pi^{-1}(U)$,
which we denote
$\Gamma:U'\times (-\pi,\pi)\times B_{2d}(\mathbf{0},\delta)\longrightarrow X$. 
More precisely, we require that
for any $x\in U'$ the restriction 
$$
\gamma_x=:\Gamma(x,\cdot,\cdot):(-\pi,\pi)\times B_{2d}(\mathbf{0},\delta)\longrightarrow X$$ 
be a HLCS centered at $x$, that we denote additively by 
$\gamma_x(\theta,\mathbf{v})=x+(\theta,\mathbf{v})$. In particular, $r_\vartheta(x+\mathbf{v})=x+(\vartheta,\mathbf{v})$.
In addition, we shall assume without loss that
$$
x+(\theta+\vartheta,\mathbf{v})=\big(x+(\theta,\mathbf{0})\big)+(\vartheta,\mathbf{v}).
$$
This gives a meaning to the expression 
$p+\mathbf{v}$ when $p\in U$ and $\mathbf{v}\in \mathbb{R}^{2d}$
has suitably small norm (namely, $p+\mathbf{v}=:\pi\big(x+\mathbf{v}\big)$ if $p=\pi(x)$).

Let $N^sF_a$ be the symplectic normal bundle of $F_a$ (Definition \ref{defn:tangent_normal_spaces}), 
and let $\left.N^sF_a\right|_U$ be its
restriction to $U\subseteq F_a$. Let $\mathfrak{N}_U\subseteq \left.N^sF_a\right|_U$ be a suitably small open
neighborhood of the zero section. Then  
$\varsigma_U:(p,\mathbf{u})\in \mathfrak{N}_U\mapsto p+\mathbf{u}\in M$ is a diffeomorphism onto its image 
$\mathfrak{N}_U'=:\zeta_U(\mathfrak{N}_U)\subseteq M$, which is then a tubular neighborhood of $U$).

Set
$c_a=:d-d_a$, so that $2\,c_a$ is the (real) codimension of $F_a$ in $M$.
Perhaps after replacing $U$ by a smaller open neighborhood of $p_0$ in $F_a$, 
we may find an orthogonal local trivialization of $N^sF_a$ on $U$ (with respect to the Riemannian
metric $g$); thus
we may smoothly and unitarily identify $N^s_pF_a\cong \mathbb{R}^{2c_a}$ for $p\in U$. Accordingly, 
perhaps after restricting $\mathfrak{N}_U$, we shall identify
$\mathfrak{N}_U\cong U\times B_{2c_a}(\mathbf{0},\delta)$ for some $\delta>0$. Using this,
we may think of $\varsigma_U$ as as diffeomorphism
\begin{center}
 $\varsigma_U:U\times B_{2c_a}(\mathbf{0},\delta)\cong \mathfrak{N}_U'$ with 
$(p,\mathbf{u})\mapsto m=m(p,\mathbf{u})=:p+\mathbf{u}$.
\end{center}

When $U$ varies in a sufficiently fine finite open cover $(U_{aj})$ of $F_a$, 
the union of all the $\mathfrak{N}'_{aj}=:\mathfrak{N}'_{U_{aj}}$'s 
is an open neighborhood
of $F_a$, which we may assume without loss to be equal to $M'_a$ 
in (\ref{eqn:trace_evaluation_integral_localized}). 
Let $(\beta_{aj})$ be a partition of unity on $M'_a$ subordinate to the open cover$(\mathfrak{N}'_{aj})$. 
We can then rewrite the $a$-th summand in (\ref{eqn:trace_evaluation_integral_localized})  
as follows:
\begin{eqnarray}
 \label{eqn:a_th_summand_expanded}
\lefteqn{\int_{M'_a}\beta_a(m)\,\mathbf{U}_{\tau_0,k}(m)\,\mathrm{d}V_M(m)}\\
&=&\sum_j\int_{\mathfrak{N}'_{aj}}\beta_{aj}(m)\,\beta_a(m)\,\mathbf{U}_{\tau_0,k}(m)\,\mathrm{d}V_M(m)\nonumber\\
&=&\sum_j\int_{U_{aj}\times B_{2c_a}(\mathbf{0},\delta)}\beta_{aj}(p+\mathbf{u})\,\beta_a(p+\mathbf{u})\,\mathbf{U}_{\tau_0,k}(p+\mathbf{u})
\,\mathrm{dV}_M^{(j)}(p+\mathbf{u});\nonumber
\end{eqnarray}
here in the $j$-th summand 
$p+\mathbf{u}=\zeta_{U_j}(p,\mathbf{u})$, and
$$\mathrm{dV}_M^{(j)}(p+\mathbf{u})=:\zeta_{U_j}^*(\mathrm{dV}_M)(p,\mathbf{u}).$$

By Proposition \ref{prop:rapid_decrease}, only a negligible contribution
to the asymptotics is lost if the integrand in the $j$-th summand in (\ref{eqn:a_th_summand_expanded}) 
is multiplied by 
$\gamma_k(\mathbf{u})=:\gamma_1\left(k^{7/18}\,\|\mathbf{u}\|\right)$, $\gamma_1\in \mathcal{C}^\infty_0\left(\mathbb{R}\right)$
being a bump function
$\equiv 1$ in a neighborhood of the origin. The factor $\beta_a(p+\mathbf{u})$ may then be omitted.

If we now compose each $\zeta_{U_j}$ with rescaling in $\mathbf{u}$
by $k^{-1/2}$, (\ref{eqn:a_th_summand_expanded}) may be rewritten as follows: 
\begin{eqnarray}
\label{eqn:a_th_summand_integrated_recaled_u}
 \lefteqn{\int_{M'_a}\beta_a(m)\,\mathbf{U}_{\tau_0,k}(m)\,\mathrm{d}V_M(m)}\\
&\sim&k^{-c_a}\,\sum_j\int_{U_{aj}\times \mathbb{R}^{2c_a}}\gamma_1\left(k^{-1/9}\,\|\mathbf{u}\|\right)\,
\beta_{aj}\left(p+\frac{\mathbf{u}}{\sqrt{k}}\right)\nonumber\\
&&\,\,\,\,\,\,\,\,\,\,\,\,\,\,\,\,\,\,\,\,\,\,\,\,\,\,\,\,\,\,\,\,\,\,\,\,\,\,\,\,\,\,\,\cdot 
\mathbf{U}_{\tau_0,k}\left (p+\frac{\mathbf{u}}{\sqrt{k}}\right)\,\mathrm{dV}_M^{(j)}\left(p+\frac{\mathbf{u}}{\sqrt{k}}\right).
\nonumber
\end{eqnarray}

Given (\ref{eqn:defn_zeta_F_rescaled}), we have for each $j$
\begin{eqnarray}
\label{eqn:volume_form_rescaled_a}
\mathrm{d}V_M^{(j)}\left(p+\frac{\mathbf{u}}{\sqrt{k}}\right)
&\sim&\zeta_{N^a}(p)\cdot \left[1+\sum_{h\ge 1}k^{-h/2}\,B_h^{(j)}(p,\mathbf{u})\right]\nonumber\\
&&\cdot (S_{Fa})_p\wedge \mathrm{d}\mathbf{u},
\end{eqnarray}
where we have written $N^a$ for $N^sF_a$ and $\mathrm{d}\mathbf{u}$ for
the standard volume density on
$\mathbb{R}^{2c_a}$, and $B_h^{(j)}:U_j\times \mathbb{R}^{2c_a}\rightarrow \mathbb{R}$ is homogeneous 
of degree $h$ in $\mathbf{u}$.
Similarly,
\begin{equation}
\label{eqn:beta_aj_rescaled}
 \beta_{aj}\left(p+\frac{\mathbf{u}}{\sqrt{k}}\right)\sim
\sum_{l\ge 0}k^{-l/2}\,\beta_{ajl}\left(p,\mathbf{u}\right),
\end{equation}
where $\beta_{ajl}\left(p,\mathbf{u}\right)$ is homogeneous in $\mathbf{u}$ of degree $l$,
and $\beta_{aj0}\left(p,\mathbf{u}\right)=\beta_{aj}(p)$.

Then, multiplying the asymptotic
expansions in (\ref{eqn:volume_form_rescaled_a}), (\ref{eqn:beta_aj_rescaled}) and 
Corollary \ref{cor:local_fixed_time_expansion},
we conclude that (\ref{eqn:a_th_summand_integrated_recaled_u}) may be rewritten
\begin{eqnarray}
\label{eqn:a_th_summand_integrated_iterated_integral_Lambda}
\int_{M'_a}\beta_a(m)\,\mathbf{U}_{\tau_0,k}(m)\,\mathrm{d}V_M(m)
&\sim&
 h_a^k\,k^{-c_a}\,\left(\frac k\pi\right)^d\,\int _{F_a}
\,\Lambda_k(p)\,\mathcal{S}_{F_a}(p),
\end{eqnarray}
where $\Lambda_k$ has the form
\begin{eqnarray}
\label{eqn:defn_of_Lambda_k}
 \Lambda_k(p)=\int_{\mathbb{R}^{2c_a}}\gamma_1\left(k^{-1/9}\,\mathbf{u}\right)
e^{-\mathfrak{Q}_{\tau_0,p}(\mathbf{u})}\,
G_k(p,\mathbf{u})\,\mathrm{d}\mathbf{u};
\end{eqnarray}
here
\begin{eqnarray}
 \label{eqn:defn_G_k_p_u}
G_k(p,\mathbf{u})&\sim& \dfrac{2^d}{\nu(\tau_0,p)}\,\zeta_{N^a}(p)
\sum_{l\ge 0}k^{-l/2}\,c_{l}(p,\mathbf{u}),
\end{eqnarray}
and the $c_l$'s are polynomials in $\mathbf{u}$, of degree $\le 3\,l$ and parity $l$;
the leading term is 
\begin{eqnarray}
 \label{eqn:G_k_0}
 c_{0}(p,\mathbf{u})=\varrho_{\tau_0}(p).
\end{eqnarray}

We now aim to estimate $\Lambda_k$ asymptotically. Since in (\ref{eqn:defn_of_Lambda_k}) 
integration in $\mathrm{d}\mathbf{u}$ is over a ball centered at the origin and of radius $O\left(k^{1/9}\right)$,
the asymptotic expansion (\ref{eqn:defn_G_k_p_u}) may be integrated term by term. 
Given this, in view of
(\ref{eqn:normal_restriction_pos_def}) we only lose a rapidly decreasing contribution if in (\ref{eqn:defn_of_Lambda_k})
the cut-off is omitted and integration is taken over all of $\mathbb{R}^{2c_a}$.
Therefore, taking into account the previous assertion about the parity of the $c_l$'s,
\begin{eqnarray}
 \label{eqn:defn_Lambda_k_expanded}
 \Lambda_k(p)&\sim& \sum_{l\ge 0}k^{-l}\,\lambda_l(p),
\end{eqnarray}
where 
\begin{eqnarray}
\label{eqn:defn_lambda_l}
\lambda_l(p)&=:&\dfrac{2^d}{\nu(\tau_0,p)}\,\zeta_{N^a}(p)
\int_{\mathbb{R}^{2c_a}}\,c_{2l}(p,\mathbf{u})
\,e^{-\mathfrak{Q}_{\tau_0,p}(\mathbf{u})}\,\mathrm{d}\mathbf{u}.
\end{eqnarray}
In particular, the leading order term is
\begin{eqnarray}
\label{eqn:defn_lambda_0}
\lambda_0(p)&=:&\dfrac{2^d}{\nu(\tau_0,p)}\,\zeta_{N^a}(p)\,
\varrho_{\tau_0}(p)\cdot 
\int_{\mathbb{R}^{2c_a}}\,
\,e^{-\mathfrak{Q}_{\tau_0,p}(\mathbf{u})}\,\mathrm{d}\mathbf{u}\nonumber\\
&=&\dfrac{2^d}{\nu(\tau_0,p)}\,\zeta_{N^a}(p)\,
\varrho_{\tau_0}(p)\,\pi^{c_a}\,\det\left(\mathfrak{Q}_p^{\mathrm{nor}}\right)^{-1/2}.
\end{eqnarray}

Inserting this in (\ref{eqn:a_th_summand_integrated_iterated_integral_Lambda}) completes the proof
of Theorem \ref{thm:trace_asymptotics_tau_0}.

\end{proof}

\section{Trace asymptotics at rescaled times}

\subsection{Morse-Bott periods}

We next aim to discuss global scaling asymptotics for traces, the scaling being now with respect
to the time variable. The result that we shall discuss presently applies to a more restrictive class
of periods than very clean ones, which we now define.

\begin{defn}
\label{defn:morse_bott_periods}
We shall say that $\tau_0$ is a \textit{Morse-Bott (very) clean period}
  of $\phi^M$ if
  \begin{enumerate}
   \item $\tau_0$ is a (very) clean period of $\phi^M$
(Definition \ref{defn:very_clean_periods});
   \item the restriction of $f$ to $M_{\tau_0}$, $f_{\tau_0}=:\left.f\right|_{M_{\tau_0}}:M_{\tau_0}\rightarrow M_{\tau_0}$, is a Morse-Bott
   function.
  \end{enumerate}
\end{defn}

Albeit quite restrictive, these conditions are satisfied in many natural situations.

\begin{exmp}
 If $f:M\rightarrow M$ is a Morse-Bott function, then $0\in \mathbb{R}$ is a Morse-Bott
 very clean period of $f$.
\end{exmp}

\begin{exmp}
Let $(N,\eta)$ be any compact symplectic manifold.
If $\delta\ge 1$ is an integer, suppose that $\mu:\mathbf{T}^\delta\times M\rightarrow M$ is an
Hamiltonian action of the compact $\delta$-dimensional torus $\mathbf{T}^\delta$ on $(N,\eta)$, with moment map
$\Phi:N\rightarrow \mathfrak{t}^*\cong \mathbb{R}^\delta$. If $\xi\in \mathfrak{t}$ (the Lie algebra of $\mathbf{T}$),
set $f=:\Phi_\xi=\langle \Phi,\xi\rangle :N\rightarrow \mathbb{R}$; then the Hamiltonian flow of
$f$ is simply the restriction of $\mu$ to the 1-parameter subgroup generated by $\xi$. 
Then $f$ is, for any choice of $\xi$,
a Morse-Bott function and its critical submanifolds are all symplectic, 
with even Morse indexes and coindexes.
Furthermore, for any $g\in \mathbf{T}^\delta$ the fixed locus $N_g\subseteq N$ of $\mu_g$ is a 
$\mathbf{T}$-invariant compact symplectic submanifold, and so the restriction of $f$ to it is a Morse-Bott function
(\cite{atiyah}, \cite{mcduff-salamon},
\cite{nicolaescu}, \S 3.5).
\end{exmp}

\begin{defn}
 \label{defn:technical_notation}
If $\tau_0\in \mathbb{R}$ is a Morse-Bott very clean period of $\phi^M$,
we shall adopt the following notation.
As before,
$(F_a)$ will be the connected components of the fixed locus
$M_{\tau_0}$, $a=1,\ldots,\ell_{\tau_0}$. For each $a$, 
\begin{itemize}
 \item $d_a=:\frac 12\,\dim (F_a)\in \mathbb{Z}$;
\item $f_a:F_a\rightarrow \mathbb{R}$ denotes the restriction
of $f$ to $F_a$.
\item $C_{ab}\subseteq F_a$, ($1\le b\le s_a$), is the 
connected critical submanifolds of
$f_a$ in $F_a$.
\end{itemize}
Furthermore,
for each pair $(a,b)$ with $a=1,\ldots,\ell_{\tau_0}$ and $b=1,\ldots,s_a$:
\begin{itemize}
 \item $d_{ab}=:\frac 12\,\dim(C_{ab})\in \frac 12\,\mathbb{Z}$.
\item $f_{ab}\in \mathbb{R}$ is the constant value of $f$ on $C_{ab}$.
\item $H_q(f)$ is the (non-degenerate) transverse Hessian of $f_a$ at $q\in C_{ab}$.
\item $q\in C_{ab}\mapsto \det \big(H_q(f)\big)$ denotes the determinant of
$H_q(f)$ with respect to any orthonormal basis of $N^g_q(C_{ab}/F_a)$.
\item $\sigma_{ab}\in \mathbb{Z}$ is the constant signature of $H_q(f)$ along $C_{ab}$. 
\end{itemize}

\end{defn}

\begin{thm}
 \label{thm:global_trace_asymptotics_tau_0_morse_bott}
Suppose that $\tau_0\in \mathbb{R}$ is a Morse-Bott very clean period of $\phi^M$,
and adopt the notation in Definition \ref{defn:technical_notation}. Fix $C>0$.
Then,
uniformly for
\begin{equation}
 \label{eqn:bound_on_tau}
C\,k^{-\frac{1}{9}}<|\tau|<C\,k^{\frac{1}{9}},
\end{equation}
we have
$$
\mathrm{trace}\big(U_{\tau_0+\frac{\tau}{\sqrt{k}}}\big)=\sum_{a=1}^{\ell_{\tau_0}}\sum_{b=1}^{s_a}\mathcal{L}_{ab}(k,\tau),
$$
where for $k\rightarrow +\infty$ each summand $\mathcal{L}_{ab}(k,\tau)$
is given by an asymptotic expansion of the form
\begin{eqnarray}
 \label{eqn:global_trace_asymptotics_tau_0_morse_bott_ath}
\mathcal{L}_{ab}(k,\tau)&\sim&h_a^k\cdot \dfrac{k^{\frac 12\,(d_a+d_{ab})}}{\pi^{d_a}}\cdot
\left(\dfrac{2\pi}{\tau}\right)^{d_a-d_{ab}}
\,e^{i\sqrt{k}\,\tau f_{ab}+\frac i4\pi\,\sigma_{ab}}\nonumber\\
&&\cdot \sum_{l,r\ge 0 }k^{-l/2}\left(\tau\sqrt{k}\right)^{-r}\,\mathcal{A}_{ablr}(\tau),
\end{eqnarray}
with $\mathcal{A}_{ablr}(\tau)$ a polynomial in $\tau$ of degree $\le 3(l+r)$.
The leading order coefficient is given by
\begin{eqnarray}
\mathcal{A}_{ab00}&=&\int_{C_{ab}}\,\varrho_{\tau_0}(q)\,
\dfrac{2^d}{\nu(\tau_0,q)}\,\zeta_{F_a}(q)\,\zeta_{N_a}(q)\nonumber\\
&&\cdot\det\left(\mathfrak{Q}_q^{\mathrm{nor}}\right)^{-1/2}\,|\det H_q(f)|^{-1/2}\,
\mathcal{E}_{C_{ab}}(q).
\end{eqnarray}
Furthermore, $\mathcal{L}_{ab}$ is localized in the neighborhood of $C_{ab}$, in the sense that it is negligible
as soon as $R_{\tau'}$ is smoothing near $C_{ab}$, for $\tau'$ near $\tau_0$.
\end{thm}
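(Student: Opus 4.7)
The starting point is the trace integral
\begin{equation*}
\mathrm{trace}\bigl(U_{\tau_k,k}\bigr)=\int_M \mathbf{U}_{\tau_k,k}(m)\,\mathrm{dV}_M(m),\qquad \tau_k=\tau_0+\tau/\sqrt{k},
\end{equation*}
where $\mathbf{U}_{\tau_k,k}$ is the descent to $M$ of the diagonal restriction $U_{\tau_k,k}(x,x)$. By Proposition \ref{prop:rapid_decrease_off_fixed_locus}, in the range $|\tau|\le C\,k^{1/9-\epsilon}$ the integrand is $O(k^{-\infty})$ outside an $O\bigl(k^{-7/18}\bigr)$-neighborhood of $X_{\tau_0}$, so only a negligible loss occurs if we restrict to tubular neighborhoods of the connected components $F_a\subseteq M_{\tau_0}$ and decompose $\mathrm{trace}=\sum_a \mathcal{F}_a(\tau_k,k)$ accordingly. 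As in the proof of Theorem \ref{thm:trace_asymptotics_tau_0}, I would then trivialize $N^\omega F_a$ over a fine cover of $F_a$, use a partition of unity, perform normal exponential charts $(p,\mathbf{u})\mapsto p+\mathbf{u}$, and rescale $\mathbf{u}\leadsto \mathbf{u}/\sqrt{k}$. The integration in $\mathbf{u}$ occupies a ball of radius $O(k^{1/9})$ because of the rescaling, and Proposition \ref{prop:rapid_decrease_off_fixed_locus} further guarantees that extending it to all of $\mathbb{R}^{2c_a}$ only loses a rapidly decaying remainder.

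On the piece supported near $F_a$, Corollary \ref{cor:local_scaling_asymptotics_time_fixed_point_case} supplies the local expansion for the diagonal values. After pulling out the scalar factors $h_a^k\,(k/\pi)^{d}\cdot 2^d/\nu(\tau_0,p)$ and the global oscillation $e^{i\sqrt{k}\tau f(p)}$, and incorporating the volume density expansion \eqref{eqn:defn_zeta_F_rescaled}, the $\mathbf{u}$-integral is a Gaussian with quadratic exponent $\mathcal{S}_{\tau_0,p}(\mathbf{u},\tau\upsilon_f(p)+\mathbf{u})$, whose real part is negative definite in $\mathbf{u}$ by Remark \ref{rem:positive_definite_hamiltonian}. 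Carrying out the Gaussian integration exactly as in \eqref{eqn:leading_order_term_other}--\eqref{eqn:lower_order_term_other} (noting that the $\upsilon_f(p)$-shift inside the argument contributes only to an overall polynomial in $\tau$, and the $b_j$-terms from Corollary \ref{cor:local_scaling_asymptotics_time_fixed_point_case} have joint polynomial degree $\le 3j$), I arrive at an iterated integral of the form
\begin{equation*}
\mathcal{F}_a(\tau_k,k)\sim h_a^k\,\Bigl(\frac{k}{\pi}\Bigr)^{d_a}\,\int_{F_a} e^{i\sqrt{k}\,\tau\,f_a(p)}\,G_{k}(p,\tau)\,\mathcal{E}_{F_a}(p),
\end{equation*}
where $G_k(p,\tau)\sim \sum_{l\ge 0} k^{-l/2}\,g_l(p,\tau)$ with $g_0=\varrho_{\tau_0}(p)\,(2^d/\nu(\tau_0,p))\,\zeta_{F_a}(p)\,\zeta_{N_a}(p)\,\det(\mathfrak{Q}_p^{\mathrm{nor}})^{-1/2}$ and each $g_l$ polynomial in $\tau$ of degree $\le 3l$.

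The second and decisive step is to evaluate the remaining integral along $F_a$ by stationary phase in the large parameter $\lambda=\sqrt{k}\,\tau$. Under the Morse--Bott assumption on $f_a$, the critical set of the phase is exactly $\sqcup_b C_{ab}\subseteq F_a$, with clean, nondegenerate transverse Hessian $H_q(f)$. Applying the Morse--Bott stationary phase formula (e.g., H\"ormander, Vol.~I) on each $C_{ab}$ — with residual Hausdorff codimension $2(d_a-d_{ab})$ — produces the factor
\begin{equation*}
\Bigl(\frac{2\pi}{\lambda}\Bigr)^{d_a-d_{ab}}\,e^{i\lambda f_{ab}+\frac{i\pi}{4}\sigma_{ab}}\,|\det H_q(f)|^{-1/2}
\end{equation*}
together with a descending expansion in powers of $\lambda^{-1}=(\tau\sqrt{k})^{-1}$. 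The integrand's values on $C_{ab}$ then give precisely the stated leading coefficient $\mathcal{A}_{ab00}$, and combining this expansion with the earlier expansion in $k^{-1/2}$ from $G_k$ yields the double series in \eqref{eqn:global_trace_asymptotics_tau_0_morse_bott_ath}. The polynomial bounds $\deg\mathcal{A}_{ablr}\le 3(l+r)$ follow from the corresponding $\tau$-polynomial bounds on $g_l$ combined with the universal polynomial bounds on the stationary-phase correction operators.

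\textbf{Main obstacle.} The delicate part is the interplay of the two expansion parameters $k^{-1/2}$ and $(\tau\sqrt{k})^{-1}$: since $\tau$ may itself shrink like $k^{-1/9}$, stationary phase in $\lambda$ is effective only so long as $\lambda=\sqrt{k}\,\tau\to\infty$, which forces the lower bound $|\tau|>C k^{-1/9}$. On the other hand, the polynomial blow-up $\deg_\tau g_l\le 3l$ in the amplitude means that the $l$-th term is of size $k^{-l/2}\tau^{3l}$, and this is controlled by the upper bound $|\tau|<Ck^{1/9}$ (so each successive term is smaller by $k^{-l/6}$, exactly as in Theorem \ref{thm:local_scaling_asymptotics_time}). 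Verifying that both expansions may be interchanged with the remainder estimates — i.e.\ that a joint $(l,r)$-step Taylor remainder for $G_k$ composed with a stationary-phase $N$-step remainder is still uniformly $O\bigl(k^{-(l+r)/6}\bigr)$ in the stated range — is the main technical burden; this is handled by the standard stationary-phase remainder estimate applied to $N$ sufficiently large, combined with the polynomial $\tau$-bounds on both $g_l$ and on the higher-order $b_j$ in Corollary \ref{cor:local_scaling_asymptotics_time_fixed_point_case}. The localization claim ($\mathcal{L}_{ab}$ negligible when $R_{\tau'}$ is smoothing near $C_{ab}$) is immediate from the Morse--Bott localization of the stationary-phase expansion.
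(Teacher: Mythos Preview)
Your plan matches the paper's argument step for step: localize the trace integral to tubular neighborhoods of the $F_a$ via Proposition~\ref{prop:rapid_decrease_off_fixed_locus}, rescale normal coordinates and insert Corollary~\ref{cor:local_scaling_asymptotics_time_fixed_point_case}, integrate out the normal variable $\mathbf{u}$, then treat the remaining integral over $F_a$ as an oscillatory integral in $\lambda=\tau\sqrt{k}$ and apply Morse--Bott stationary phase along each $C_{ab}$.

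There is, however, one genuine slip. You assert that after the Gaussian $\mathbf{u}$-integration the amplitude $g_l(p,\tau)$ is a polynomial in $\tau$ of degree $\le 3l$ for \emph{every} $p\in F_a$, and that the $\upsilon_f(p)$-shift ``contributes only to an overall polynomial in $\tau$''. That is false away from the critical locus: the exponent $\mathcal{S}_{\tau_0,p}\bigl(\mathbf{u},\tau\upsilon_f(p)+\mathbf{u}\bigr)$ has real part bounded above by $-C\bigl(\|\mathbf{u}\|^2+\tau^2\|\upsilon_f(p)\|^2\bigr)$ (Remark~\ref{rem:positive_definite_hamiltonian}), so the $\mathbf{u}$-integral carries a factor of the type $e^{-c\,\tau^2\|\upsilon_f(p)\|^2}$ rather than a polynomial. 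The paper therefore does \emph{not} simplify the Gaussian before stationary phase; it keeps $K_l(p,\tau)=\int e^{\mathcal{S}_{\tau_0,p}(\mathbf{u},\tau\upsilon_f(p)+\mathbf{u})}c_l\,\mathrm{d}\mathbf{u}$ intact, checks the derivative bound $|\partial^\alpha G_l|=O\bigl(|\tau|^{3l+2|\alpha|}\bigr)$ needed for stationary phase (each $p$-derivative of the exponent brings down a factor at most quadratic in $(\mathbf{u},\tau)$), and only \emph{after} localizing to $q\in C_{ab}$ invokes the key observation that $\upsilon_f(q)=0$ there. This last fact---that the critical points of $f_a=f|_{F_a}$ are automatically critical points of $f$ on $M$, because $\upsilon_f$ is tangent to the invariant submanifold $F_a$ and hence coincides with the Hamiltonian field of $f_a$---is stated and proved separately as Lemma~\ref{lem:critical_locus_ab}. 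It is what collapses the exponent to $-\mathfrak{Q}_{\tau_0,q}(\mathbf{u})$ on $C_{ab}$, yields the claimed $\det(\mathfrak{Q}_q^{\mathrm{nor}})^{-1/2}$ in $\mathcal{A}_{ab00}$, and makes each $\mathcal{A}_{ablr}(\tau)$ honestly polynomial in $\tau$. You should isolate and prove this lemma explicitly; without it your formula for $g_0$ is wrong at generic $p\in F_a$, and your degree bound $\deg_\tau\mathcal{A}_{ablr}\le 3(l+r)$ is left unjustified.
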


\begin{rem}
 \label{rem:why_expansion}
To see why (\ref{eqn:global_trace_asymptotics_tau_0_morse_bott_ath}) is indeed an asymptotic expansion
for $k\rightarrow +\infty$, let us
write
 $$
\sum_{l,r\ge 0 }k^{-l/2}\left(\tau\sqrt{k}\right)^{-r}\,\mathcal{A}_{ablr}=
\sum_{s\ge 0}\left[\sum_{l+r=s }k^{-l/2}\left(\tau\sqrt{k}\right)^{-r}\,\mathcal{A}_{ablr}(\tau)\right],
$$
and remark that given that $C\,k^{-\frac{1}{9}}<|\tau|$ by
(\ref{eqn:bound_on_tau}) if $l+r=s$ then
$$
k^{l/2}\left(|\tau|\sqrt{k}\right)^{r}\ge C^r\,k^{\frac 12\,(l+r)-\frac{1}{9}\,r}\ge C^r\,k^{\frac{7}{18}\,s}.
$$
Therefore, since each $\mathcal{A}_{ablr}(\tau)$ has degree $\le 3s$ in $\tau$, and we are also assuming
$|\tau|<C\,k^{\frac{1}{9}}$ by
(\ref{eqn:bound_on_tau}), we have
$$
\left|\sum_{l+r=s }k^{-l/2}\left(\tau\sqrt{k}\right)^{-r}\,\mathcal{A}_{ablr}(\tau)\right|\le
C_s\,k^{-\frac{7}{18}\,s}\,k^{\frac{1}{9}\,3s}=C_s\,k^{-\frac{1}{18}\,s}.
$$
\end{rem}

\begin{proof}
Let us backtrack to the proof of Theorem \ref{thm:trace_asymptotics_tau_0},
and set $\tau'=\tau_k=:\tau_0+\tau/\sqrt{k}$ in (\ref{eqn:trace_evaluation_integral}). Then (\ref{eqn:trace_evaluation_integral_localized}),
(\ref{eqn:a_th_summand_expanded}), and (\ref{eqn:a_th_summand_integrated_recaled_u}) continue to hold, with $\tau_0$
replaced by $\tau_k$.

However, instead of Corollary \ref{cor:local_fixed_time_expansion},
we now need to use Corollary \ref{cor:local_scaling_asymptotics_time_fixed_point_case}.
Since in our construction
$\mathbf{u}\in N_p^\omega F_a$ while $\upsilon_f(p)\in T_pF_a$ because $F_a$
is $\phi^M$-invariant,
we have $\omega_p\big(\upsilon_f(p),\mathbf{u}\big)=0$.
Therefore, in place of (\ref{eqn:a_th_summand_integrated_iterated_integral_Lambda}) we have
\begin{eqnarray}
\label{eqn:a_th_summand_integrated_iterated_integral_Lambda_tau}
\lefteqn{
\int_{M'_a}\beta_a(m)\,\mathbf{U}_{\tau_k,k}(m)\,\mathrm{d}V_M(m)}\nonumber\\
&\sim&
 h_a^k\,k^{-c_a}\,\left(\frac k\pi\right)^d\,\int _{F_a}
\,\Lambda_k(p,\tau)\,\mathcal{S}_{F_a}(p),
\end{eqnarray}
where now
\begin{eqnarray}
\label{eqn:defn_of_Lambda_k_tau}
 \Lambda_k(p,\tau)=:e^{i\tau\,\sqrt{k}\,f(p)}\,\int_{\mathbb{R}^{2c_a}}\gamma_1\left(k^{-1/9}\,\mathbf{u}\right)
e^{
\mathcal{S}_{\tau_0,p}\big(\mathbf{u},\tau\,\upsilon_f(p)+\mathbf{u}\big)}\,
G_k(p,\mathbf{u},\tau)\,\mathrm{d}\mathbf{u};
\end{eqnarray}
here
\begin{eqnarray}
 \label{eqn:defn_G_k_p_u}
G_k(p,\mathbf{u},\tau)&\sim& \dfrac{2^d}{\nu(\tau_0,p)}\,\zeta_{N^a}(p)
\sum_{l\ge 0}k^{-l/2}\,c_{l}(p,\mathbf{u},\tau),
\end{eqnarray}
and the $c_l$'s are polynomials in $(\mathbf{u},\tau)$, of degree $\le 3\,l$;
the leading term is 
\begin{eqnarray}
 \label{eqn:G_k_0}
 c_{0}(p,\mathbf{u})=\varrho_{\tau_0}(p).
\end{eqnarray}
We have
on the exponent
the estimate (\ref{eqn:normal_restriction_pos_def-hamiltonian}) in Remark \ref{rem:positive_definite_hamiltonian}.

Therefore, in place of (\ref{eqn:defn_Lambda_k_expanded}) we now have 
\begin{eqnarray}
 \label{eqn:defn_Lambda_k_expanded_tau}
 \Lambda_k(p,\tau)&\sim& \sum_{l\ge 0}k^{-l/2}\,\lambda_l(p,\tau),
\end{eqnarray}
where 
\begin{eqnarray}
\label{eqn:defn_lambda_l_tau}
\lambda_l(p,\tau)&=:&\dfrac{2^d}{\nu(\tau_0,p)}\,\zeta_{N^a}(p)\,e^{i\tau\,\sqrt{k}\,f(p)}\nonumber\\
&&\cdot \int_{\mathbb{R}^{2c_a}}\,e^{\mathcal{S}_{\tau_0,p}\big(\mathbf{u},\tau\,\upsilon_f(p)+\mathbf{u}\big)}\,
c_{l}(p,\mathbf{u},\tau)
\,\mathrm{d}\mathbf{u}\nonumber\\
&=&e^{i\tau\,\sqrt{k}f(p)}\,\dfrac{2^d}{\nu(\tau_0,p)}\,\zeta_{N^a}(p)\,\,K_l(p,\tau),
\end{eqnarray}
with 
\begin{equation}
\label{eqn:defn_K_l}
 K_l(p,\tau)=:\int_{\mathbb{R}^{2c_a}}\,e^{\mathcal{S}_{\tau_0,p}\big(\mathbf{u},\tau\,\upsilon_f(p)+\mathbf{u}\big)}\,
c_{l}(p,\mathbf{u},\tau)
\,\mathrm{d}\mathbf{u}.
\end{equation}

Using this in (\ref{eqn:a_th_summand_integrated_iterated_integral_Lambda_tau}), we obtain
\begin{eqnarray}
\label{eqn:a_th_summand_integrated_iterated_integral_Lambda_tau_expanded}
\int_{M'_a}\beta_a(m)\,\mathbf{U}_{\tau_k,k}(m)\,\mathrm{d}V_M(m)\sim
h_a^k\,k^{-c_a}\,\left(\frac k\pi\right)^d\,
 \sum_{l\ge 0}k^{-l/2}\,H_{al}(k),
\end{eqnarray}
with
\begin{equation}
 \label{eqn:defn_H_al}
H_{al}(k,\tau)=:
\int _{F_a}
\,e^{i\tau\,\sqrt{k}f(p)}\,\dfrac{2^d}{\nu(\tau_0,p)}\,\zeta_{N^a}(p)\,\,K_l(p,\tau)\,\mathcal{S}_{F_a}(p).
\end{equation}

In the range (\ref{eqn:bound_on_tau}), $\tau\,\sqrt{k}\rightarrow +\infty$ for $k\rightarrow +\infty$, 
and therefore
$H_{al}(k)$ may be interpreted as an oscillatory integral in $\tau\,\sqrt{k}$ with real phase $f$.
Since we are assuming $|\tau|<C\,k^{1/9}$ we also have
$|\tau|^2<C\,\big(\tau\,\sqrt k\big)^\delta$, with say $\delta=2/5<1/2$. 
Call $G_l$ the amplitude in (\ref{eqn:defn_H_al}); then we see from (\ref{eqn:defn_lambda_l_tau})
that in any local coordinate system on $F_a$
\begin{equation}
\label{eqn:stationary_phase_condition_amplitude}
 \left|\partial^\alpha G_l\right|=O\left(|\tau|^{3l+2\,|\alpha|}\right)=O\left(|\tau|^{3l+\frac 25\,|\alpha|}\right).
\end{equation}

Therefore, we may apply the first part
the stationary phase Lemma \cite{duist}, and conclude that the integral (\ref{eqn:defn_H_al})
asymptotically localizes in the neighborhood of the critical manifolds $C_{ab}$. More precisely, for 
every $a=1,\ldots,\ell_{\tau_0}$ and $b=1,\ldots,s_a$ let $F_{ab}'\subseteq F_a$ be an open neighborhood of
$C_{ab}$ in $F_a$, and let $\gamma_{ab}:F_a\rightarrow [0,+\infty)$ be a bump function, compactly
supported in $F'_{ab}$ and $\equiv 1$ on some smaller neighborhood $F''_{ab}\Subset F'_{ab}$.
Then as $k\rightarrow +\infty$ we have
\begin{equation}
\label{eqn:defn_H_al_asymp}
H_{al}(k,\tau)=\sum_{b=1}^{s_a}H_{abl}(k,\tau),
\end{equation}
where
\begin{equation}\label{eqn:defn_H_abl_asymp}
 H_{abl}(k,\tau)=:\int _{F'_{ab}}\gamma_{ab}(p)\,
\,e^{i\tau\,\sqrt{k}f(p)}\,\dfrac{2^d}{\nu(\tau_0,p)}\,\zeta_{N^a}(p)\,\,K_l(p,\tau)\,\mathcal{S}_{F_a}(p).
\end{equation}

Arguing as in the proof of Theorem \ref{thm:trace_asymptotics_tau_0}, we can furthermore find a 
$\mathcal{C}^\infty$ finite
partition of unity 
$\delta_{abj}:F_{ab}'\rightarrow [0,+\infty)$ subordinate to an open cover $F_{abj}'$, such that
the following holds. For each $j$, let $C_{abj}=C_{ab}\cap F_{abj}$; thus $(C_{abj})$ is an open cover of
$C_{ab}$. Let $B_{2(d_a-d_{ab})}(\mathbf{0},\delta)\subseteq \mathbb{R}^{2(d_a-d_{ab})}$
be the open ball centered at the origin and of radius $\delta$. Then 
for each $j$ and some $\delta>0$ there is a diffeomorphism 
$\Phi_{abj}:C_{abj}\times B_{2(d_a-d_{ab})}(\mathbf{0},\delta)\rightarrow F_{abj}'$ of the form
$(q,\mathbf{n})\mapsto p(q,\mathbf{n})= q+\mathbf{n}=:\varphi_q^{(j)}(\mathbf{n})$, where $\varphi_q^{(j)}$ is a local
coordinate system on $F_a$ centered at $q$; furthermore, we shall require that for every $q\in C_{abj}$ 
the differential of
$d_\mathbf{0}\zeta_q$ induces a unitary isomorphism $\mathbb{R}^{2d_a}\cong T_qF_a$,
under which $\{\mathbf{0}\}\times \mathbb{R}^{2(d_a-d_{ab})}
\cong N^g _q(C_{ab}/F_a)$; here by $N^g _q(C_{ab}/F_a)$ we denote the Riemannian normal space to
$C_{ab}$ at $q$, as a submanifold of $F_a$.

Let us set $B=:B_{2(d_a-d_{ab})}(\mathbf{0},\delta)$.
Recalling that $\mathcal{S}_{F_a}=\zeta_{F_a}\cdot \mathcal{E}_{F_a}$,  
we can then rewrite $H_{abl}(k,\tau)$ in (\ref{eqn:defn_H_abl_asymp}) as follows:
\begin{eqnarray}
\label{eqn:defn_H_al_asymp_1}
\lefteqn{
H_{abl}(k,\tau)}
\\
&=&\sum_j\int _{C_{abj}}\int_{B}\,e^{i\tau\,\sqrt{k}f(q+\mathbf{n})}\,F_{abj}(q+\mathbf{n})\,\,Z_a(q+\mathbf{n},\tau)\,
\,\mathcal{E}_{F_a}(q+\mathbf{n}),\nonumber
\end{eqnarray}
where
$$
F_{abj}=:\gamma_{ab}\cdot \delta_{abj},\,\,\,\,\,\,\,\,
Z_{al}=:\zeta_{N^a}\cdot \zeta_{F_a}\,\,\dfrac{2^d}{\nu (\cdot,\tau_0)}\,\cdot K_l,
$$
and in the $j$-th summand we use $\Phi_{abj}$ to give a meaning to $q+\mathbf{n}$. Also,
\begin{equation}
 \label{eqn:comparison_volume_form_submanifold_1}
\mathcal{E}_{F_a}(q+\mathbf{n})=:\Phi_{abj}^*(\mathcal{E}_{F_a})(q+\mathbf{n})=\mathcal{V}_j(q,\mathbf{n})
\,| \mathrm{d}\mathbf{n}|\,
\mathcal{E}_{C_{ab}}(q),
\end{equation}
and by construction
$\mathcal{V}_j(q,\mathbf{n})=1+O(\mathbf{n})$.
Therefore, we can rewrite (\ref{eqn:defn_H_al_asymp_1}) as
\begin{eqnarray}
\label{eqn:defn_H_al_asymp_2}
\lefteqn{
H_{abl}(k,\tau)}
\\
&=&\sum_j\int _{C_{abj}}\left[\int_{B}\,e^{i\tau\,\sqrt{k}f(q+\mathbf{n})}\,\Psi_{abj}(q+\mathbf{n},\tau)
\,| \mathrm{d}\mathbf{n}|\right]\,\mathcal{E}_{C_{ab}}(q),\nonumber
\end{eqnarray}
where
\begin{equation}
 \label{eqb:defn_Psi_ablj}
\Psi_{ablj}=:F_{abj}\,Z_{al}\,\mathcal{V}_j
\end{equation}

By the Morse-Bott assumption, if $q\in C_{abj}$ the function $\mathbf{n}\in B\mapsto f(q+\mathbf{n})$ has
a non-degenerate critical point at the origin. Given this and (\ref{eqn:stationary_phase_condition_amplitude}), 
we may apply the second part of the stationary phase Lemma and obtain, with the
notation in Definition \ref{defn:technical_notation}, an asymptotic expansion of the form 
\begin{eqnarray}
\label{eqn:defn_H_al_asymp_3}
\lefteqn{
\int_{B}\,e^{i\tau\,\sqrt{k}f(q+\mathbf{n})}\,\Psi_{ablj}(q+\mathbf{n})
\,| \mathrm{d}\mathbf{n}|}\\
&\sim&e^{i\tau\,\sqrt{k}f_{ab}}\,\left(\frac{2\pi}{\tau\,\sqrt{k}}\right)^{d_a-d_{ab}}\,
\big|\det \big(H_q(f)\big)\big|^{-1/2}\,e^{i\,\frac{\pi}{4}\,\sigma_{ab}}\nonumber\\
&&\cdot \sum_{r=0}^{+\infty}\left(\tau\,\sqrt{k}\right)^{-r}\,\Psi_{abljr}(q,\tau),\nonumber
\end{eqnarray}
where $\Psi_{abljr}=:R_r(\Psi_{ablj})/r!$ and $R_r$ is a differential
operator of degree $2r$ in the $\mathbf{n}$-variables,
defined in terms of the transverse Hessian and the third order remainder
of $f_a$ at $q$. 
In particular, $R_0$ is the identity. 

It is then clear from this and (\ref{eqn:defn_K_l}) that $\Psi_{abljr}$ has the form
\begin{equation}
\label{eqn:psi_abljr}
 \Psi_{abljr}(p,\tau)=\int_{\mathbb{R}^{2c_a}}\,e^{\mathcal{S}_{\tau_0,p}\big(\mathbf{u},\tau\,\upsilon_f(p)+\mathbf{u}\big)}\,
c_{abljr}(p,\mathbf{u},\tau)
\,\mathrm{d}\mathbf{u}\,\,\,\,\,\,\,\,\,(p\in F'_{abj}),
\end{equation}
with $c_{abljr}(p,\mathbf{u},\tau)$ a polynomial in $\tau$ of degree $\le 3l+2r\le 3(l+r)$.

\begin{lem}
 If $q\in C_{ab}$, then $d_qf=0$; equivalently, $\upsilon_f(q)=0$. 
\label{lem:critical_locus_ab}
\end{lem}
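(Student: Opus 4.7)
The plan is to exploit the fact that the Hamiltonian flow $\phi^M_t$ is a one-parameter group, so all its members commute with $\phi^M_{\tau_0}$, and from this deduce that $\upsilon_f$ is tangent to each connected component $F_a$ of the fixed locus. Once this is established, the rest is essentially a tautology relating critical points of $f_a$ on $F_a$ to zeros of $\upsilon_f$ on $M$, using the non-degeneracy of $\omega$.

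More concretely, I would begin by observing that for any $m \in F_a \subseteq M_{\tau_0}$ and any $t \in \mathbb{R}$,
\[
\phi^M_{\tau_0}\big(\phi^M_t(m)\big) = \phi^M_t\big(\phi^M_{\tau_0}(m)\big) = \phi^M_t(m),
\]
so $\phi^M_t(m) \in M_{\tau_0}$; for $|t|$ small the orbit stays in the connected component $F_a$ by continuity. Differentiating at $t=0$ gives $\upsilon_f(m) \in T_m F_a$ for every $m \in F_a$. This tangency is the key geometric input.

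Next, since $\tau_0$ is very clean, $F_a$ is a symplectic submanifold, and the restricted form $\omega_a = \iota_{F_a}^* \omega$ is non-degenerate on $TF_a$. For any $v \in T_q F_a$ we then have
\[
(\omega_a)_q\big(\upsilon_f(q), v\big) = \omega_q\big(\upsilon_f(q), v\big) = df_q(v) = (df_a)_q(v),
\]
so $\upsilon_f|_{F_a}$ satisfies the defining equation of the Hamiltonian vector field of $f_a = f|_{F_a}$ on $(F_a,\omega_a)$; by non-degeneracy of $\omega_a$ this forces $\upsilon_f|_{F_a} = \upsilon_{f_a}$. If $q \in C_{ab}$, then $q$ is a critical point of $f_a$, so $(df_a)_q = 0$ and hence $\upsilon_{f_a}(q) = 0$, giving $\upsilon_f(q) = 0$. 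Finally, the global relation $\iota_{\upsilon_f}\omega = df$ combined with non-degeneracy of $\omega$ yields $d_q f = 0$, which is the equivalence claimed in the lemma.

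There is no real obstacle here: the only subtle point is the invariance of $F_a$ under the full Hamiltonian flow, which rests on the commutativity $\phi^M_t \circ \phi^M_{\tau_0} = \phi^M_{\tau_0} \circ \phi^M_t$ inherent in one-parameter groups. Once that is in place, the identification $\upsilon_f|_{F_a} = \upsilon_{f_a}$ and the passage from $\upsilon_f(q) = 0$ to $d_qf = 0$ are immediate from non-degeneracy of $\omega$ and $\omega_a$.
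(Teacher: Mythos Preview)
Your proof is correct and follows essentially the same route as the paper: establish that $\upsilon_f$ is tangent to $F_a$ (you make the commutativity of the one-parameter group explicit, while the paper just asserts invariance of $M_{\tau_0}$ under $\phi^M$), identify $\upsilon_f|_{F_a}$ with the Hamiltonian vector field of $f_a$ on the symplectic submanifold $F_a$, and conclude that it vanishes at critical points of $f_a$. Your additional remark that $\upsilon_f(q)=0 \Leftrightarrow d_qf=0$ via non-degeneracy of $\omega$ simply spells out the ``equivalently'' in the statement.
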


\begin{proof}
By construction, $d_qf_{a}=0$. By assumption, $F_a$ is a connected symplectic submanifold
of $M$; let $\upsilon^{(a)}_f\in \mathfrak{X}(F_a)$ be the Hamiltonian vector field of $f_{a}$.
Thus $\upsilon^{(a)}_f(q)=0$.

On the other hand, $M_{\tau_0}$ is invariant under the flow $\phi^M$ generated by
$\upsilon_f$, and therefore
so is every connected component $F_a$ of $M_{\tau_0}$. 
Hence $\upsilon_f$ is tangent to $F_a$, so $\upsilon_f=\upsilon^{(a)}_f$ along
$F_a$.
In particular, $\upsilon_f=0$ on $C_{ab}$.
\end{proof}

By (\ref{eqn:psi_abljr}) and Lemma \ref{lem:critical_locus_ab},
for $q\in C_{ab}$ we have
\begin{equation}
 \label{eqn:defn_K_l_ab}
 \Psi_{abljr}(q,\tau)=\int_{\mathbb{R}^{2c_a}}\,e^{-\mathfrak{Q}_{\tau_0,p}(\mathbf{u})}\,
c_{abljr}(q,\mathbf{u},\tau)
\,\mathrm{d}\mathbf{u},
\end{equation}
which is a polynomial in $\tau$ of degree $\le 3(l+r)$.

Going back to (\ref{eqn:defn_H_al_asymp_2}), we obtain
\begin{eqnarray}
 \label{eqn:defn_H_al_asymp_2}
\lefteqn{
H_{abl}(k,\tau)}
\\
&\sim&e^{i\tau\,\sqrt{k}f_{ab}}\,\left(\frac{2\pi}{\tau\,\sqrt{k}}\right)^{d_a-d_{ab}}\,
e^{i\,\frac{\pi}{4}\,\sigma_{ab}}\nonumber\\
&&\cdot \sum_{r=0}^{+\infty}\left(\tau\,\sqrt{k}\right)^{-r}\,
\int_{C_{ab}}\,\big|\det \big(H_q(f)\big)\big|^{-1/2}\,\Psi_{ablr}(q,\tau)\,\mathcal{E}_{C_{ab}}(q),\nonumber
\end{eqnarray}
where $\Psi_{ablr}=:\sum_j\Psi_{abljr}$. Using this in (\ref{eqn:a_th_summand_integrated_iterated_integral_Lambda_tau_expanded})
and (\ref{eqn:defn_H_al_asymp}), we obtain
\begin{eqnarray}
 \label{eqn:a_th_summand_final_expansion}
\lefteqn{\int_{M'_a}\beta_a(m)\,\mathbf{U}_{\tau_k,k}(m)\,\mathrm{d}V_M(m)}\\
&\sim&h_a^k\,\sum_{b=1}^{s_a}e^{i\tau\,\sqrt{k}f_{ab}+i\,\frac{\pi}{4}\,\sigma_{ab}}\cdot
\,\dfrac{k^{\frac 12(d_a+d_{ab})}}{\pi^{d_a}}\,\left(\frac{2\pi}{\tau}\right)^{d_a-d_{ab}}\nonumber\\
&&\cdot \sum_{l,r\ge 0}k^{-l/2}\,\left(\tau\,\sqrt{k}\right)^{-r}\,
\left[\frac{1}{\pi^{c_a}}\,\int_{C_{ab}}\,\big|\det \big(H_q(f)\big)\big|^{-1/2}\,\Psi_{ablr}(q,\tau)\,\mathcal{E}_{C_{ab}}(q)\right].
\nonumber
\end{eqnarray}

We see from (\ref{eqn:a_th_summand_final_expansion}) that (\ref{eqn:global_trace_asymptotics_tau_0_morse_bott_ath}) 
holds with 
\begin{equation}
\label{eqn:computation_A_ablr}
 \mathcal{A}_{ablr}(\tau)=:\pi^{-c_a}\,\int _{C_{ab}}\big|\det \big(H_q(f)\big)\big|^{-1/2}\,\Psi_{ablr}(q,\tau)\,\mathcal{E}_{C_{ab}}(q).
\end{equation}

Regarding the leading order term, since $c_{0}(p,\mathbf{u},\tau)=\varrho_{\tau_0}(p)$, from (\ref{eqn:defn_K_l}) we get
\begin{equation}
\label{eqn:defn_K_0}
 K_0(q,\tau)=:\varrho_{\tau_0}(q)\,\int_{\mathbb{R}^{2c_a}}\,e^{\mathcal{S}_{\tau_0,q}(\mathbf{u},\mathbf{u})}\,
\,\mathrm{d}\mathbf{u}=\varrho_{\tau_0}(q)\,\pi^{c_a}\,\det\left(\mathfrak{Q}_q^{\mathrm{nor}}\right)^{-1/2}.
\end{equation}
Pairing this with (\ref{eqb:defn_Psi_ablj}) and (\ref{eqn:computation_A_ablr}), we obtain 
\begin{eqnarray}
\label{eqn:computation_A_ab00}
\mathcal{A}_{ab00}(\tau)
&=&\int _{C_{ab}}\zeta_{N^a}(q)\, \zeta_{F_a}(q)\,\dfrac{2^d}{\nu (q,\tau_0)}\,\varrho_{\tau_0}(q)\\
&&\,\,\,\,\,\,\,\,\,\,\,\,\,\,
\cdot \big|\det \big(H_q(f)\big)\big|^{-1/2}\,\det\left(\mathfrak{Q}_q^{\mathrm{nor}}\right)^{-1/2}\,\mathcal{E}_{C_{ab}}(q).
\nonumber\end{eqnarray}
\end{proof}


\begin{thebibliography}{Dillo99}


\bibitem[At]{atiyah}M.F. Atiyah, {\em Convexity and commuting Hamiltonians}, Bull. London Math. Soc., \textbf{14},
1–15 (1982)



\bibitem[B]{ber}F. A. Berezin, {\em General concept of quantization}, Comm. Math. Phys., \textbf{40} (1975),
153--174



\bibitem[BSZ]{bsz} P. Bleher, B. Shiffman, S. Zelditch, {\em
Universality and scaling of correlations between zeros on complex
manifolds}, Invent. Math. \textbf{142} (2000), 351--395


\bibitem[BG]{bg} L. Boutet de Monvel, V. Guillemin,
{\em The spectral theory of Toeplitz operators},
Annals of Mathematics Studies, \textbf{99} (1981),
Princeton University Press, Princeton, NJ;
University of Tokyo Press, Tokyo


\bibitem[BS]{bs} L. Boutet de Monvel, J. Sj\"ostrand,
{\em Sur la singularit\'e des noyaux de Bergman et de Szeg\"o},
Ast\'erisque \textbf{34-35} (1976), 123--164


\bibitem[CGR]{cgr}
M. Cahen, S. Gutt, J. Rawnsley, {\em Quantization of K\"{a}hler manifolds. I. 
Geometric interpretation of Berezin's quantization}, 
J. Geom. Phys. 7 (1990), no. \textbf{1}, 45–62



\bibitem[C]{c} D. Catlin, {\em
The Bergman kernel and a theorem of Tian},
Analysis and geometry in several complex variables (Katata, 1997), 1-23, Trends Math.,
Birkhšauser Boston, Boston, MA, 1999

\bibitem[Ch]{ch} L. Charles, {\em
A Lefschetz fixed formula for symplectomorphisms},
J. Geom. Phys. \textbf{60} (2010), 1890-1902 


\bibitem[Dau]{d} I. Daubechies, {\em Coherent states and projective representation of the linear canonical transformations},
J. Math. Phys. \textbf{21} (1980), no. 6, 1377-1389

\bibitem[Dui]{duist} J.J. Duistermaat, {\em Fourier integral operators},
Progress in Mathematics, \textbf{130}, Birkh\"{a}user Boston, Inc., Boston, MA, 1996

\bibitem[DG]{dg} J.J. Duistermaat, V. Guillemin,
{\em The spectrum of positive elliptic operators and periodic bicharacteristics},
Invent. Math. \textbf{29} (1975), no. 1, 39--79

\bibitem[GH]{gh} P. Griffiths, J. Harris, {\em Principles of Algebraic Geometry},
John Wiley and Sons, New York, 1978

\bibitem[G]{g-star} V. Guillemin, {\em Star products on compact pre-quantizable symplectic manifolds},
Lett. Math. Phys. \textbf{35} (1995), no. 1, 85--89

\bibitem[GS]{gs} V. Guillemin, S. Sternberg, 
{\em Geometric quantization and multiplicities of group representations}, Invent. Math. \textbf{67} (1982), no. 3, 515--538.

\bibitem[KS]{ks} A. Karabegov, M, Schlichenmaier, 
{\em Identification of Berezin-Toeplitz deformation quantization},
J. Reine Angew. Math. \textbf{540} (2001), 49--76

\bibitem[KN]{kn} S. Kobayashi and K. Nomizu, {\em Foundations of Differential Geometry}, vol. II, New York:
Wiley-Interscience, 1963.


\bibitem[MM]{mm} X. Ma, G.Marinescu, {\em Holomorphic Morse inequalities and Bergman kernels},
Progress in Mathematics \textbf{254},
Birkhšauser Verlag, Basel, 2007

\bibitem[MZ]{mz} X. Ma, W. Zhang, {\em Bergman kernels and symplectic reduction}, Ast\'{e}risque No. \textbf{318} (2008) 

\bibitem[MS]{mcduff-salamon} 
D. McDuff, D. Salamon, {\em Introduction to Symplectic Topology}, Oxford University Press
(1995)

\bibitem[N]{nicolaescu} L. Nicolaescu,
{\em An Invitation to Morse
Theory},
Second Edition, Springer Universitext 2011



\bibitem[P1]{pao-trace} R. Paoletti, {\em Szeg\"{o} kernels, Toeplitz operators, and equivariant fixed point formulae},
J. Anal. Math. \textbf{106} (2008), 209--236

\bibitem[P2]{pao_ijm} R. Paoletti, {\em Asymptotics of Szeg\"{o} kernels under Hamiltonian
torus actions}, Israel J. Math. \textbf{191} (2012), no. 1, 363--403

\bibitem[P3]{paoletti_intjm} R. Paoletti, {\em Scaling asymptotics for quantized Hamiltonian flows}, 
Internat. J. Math. \textbf{23} (2012), no. 10, 1250102, 25 pp. 

\bibitem[Sch]{schlichenmaier} M. Schlichenmaier, 
{\em  Berezin-Toeplitz quantization for compact K\"{a}hler manifolds. An introduction}, 
Geometry and quantization, 97--124,
Trav. Math., \textbf{19}, Univ. Luxemb., Luxembourg, 2011


\bibitem[SZ]{sz} B. Shiffman, S. Zelditch, {\em Asymptotics of almost
holomorphic sections of ample line bundles on symplectic
manifolds}, J. Reine Angew. Math. {\bf 544} (2002), 181--222

\bibitem[T]{t} G. Tian,
{\em On a set of polarized K\"{a}hler metrics on algebraic manifolds}, J. Differential Geom. \textbf{32} (1990), no. 1, 99--130

\bibitem[W]{w} A. Weinstein, {Connections of Berry and Hannay type for moving Lagrangian sub manifolds}, 
Adv. Math. \textbf{82} (1990), 
133--159

\bibitem[Z1]{z-index} S. Zelditch,
{\em Index and dynamics of quantized contact transformations},
Ann. Inst. Fourier (Grenoble) \textbf{47} (1997), no. 1, 305--363

\bibitem[Z2]{z} S. Zelditch, {\em Szeg\"o kernels and a theorem of Tian},
Int. Math. Res. Not. {\bf 6} (1998), 317--331





\end{thebibliography}
\end{document}